\numberwithin{equation}{section}
\newtheorem{theorem}{Theorem}[section]
\newtheorem*{theorem*}{Theorem}
\newtheorem{definition}[theorem]{Definition}
\newtheorem{remark}[theorem]{Remark}
\newtheorem{lemma}[theorem]{Lemma}
\newtheorem{proposition}[theorem]{Proposition}
\newtheorem{corollary}[theorem]{Corollary}
\def\O{\Omega}
\def\dist{{\rm dist}}
\def\diam{{\rm diam}}
\def\dx{{\,\rm d}x}
\def\dy{{\,\rm d}y}
\def\dhx{{\,\rm d}\hat{x}}
\def\dhy{{\,\rm d}\hat{y}}
\def\dim{{\rm dim}}
\def\hx{\hat{x}}
\def\hy{\hat{y}}
\def\u{{\bf u}}
\def\v{{\bf v}}
\def\r{{\bf r}}
\def\I{{\bf I}}
\newcommand{\R}{{\mathbb R}}
\title[The Fractional Korn Inequality on Uniform Domains]{The Fractional Korn Inequality on Uniform Domains and New Korn Inequalities for Truncated Seminorms}
\author[G.~Acosta]{Gabriel Acosta}
\address[G.~Acosta]{Departamento de Matem\'atica, FCEyN, Universidad de Buenos Aires / IMAS, CONICET, Buenos Aires, Argentina}
\email{gacosta@dm.uba.ar}
\author[I.~Drelichman]{Irene Drelichman}
\address[I.~Drelichman]{Centro de Matem\'atica de La Plata, Facultad de Cs. Exactas, Universidad Nacional de La Plata / CONICET, Argentina}
\email{idrelichman@mate.unlp.edu.ar}
\author[R.~Dur\'an]{Ricardo Dur\'an}
\address[R.~Dur\'an]{Departamento de Matem\'atica, FCEyN, Universidad de Buenos Aires / IMAS, CONICET, Buenos Aires, Argentina}
\email{rduran@dm.uba.ar}
\author[F.~L\'opez-Garc\'ia]{Fernando L\'opez-Garc\'ia}
\address[F.~L\'opez-Garc\'ia]{Department of Mathematics and Statistics, California State Polytechnic University Pomona\\ Pomona, CA 91768, US}
\email{fal@cpp.edu}
\author[I.~Ojea]{Ignacio Ojea}
\address[I.~Ojea]{Departamento de Matem\'atica, FCEyN, Universidad de Buenos Aires / IMAS, CONICET, Buenos Aires, Argentina}
\email{iojea@dm.uba.ar}
\thanks{Supported in part by PIP-2023, grant 11220220100246CO (G. Acosta, I. Drelichman, R. Dur\'an, I. Ojea) and by grant UBACyT 20020160100144BA (I. Drelichman, R. Dur\'an, I. Ojea)}
\begin{document}
\subjclass[2010]{26D10, 46E35, 46E40, 74B99.}
\maketitle

\begin{abstract}
We prove the so-called second case of the fractional Korn inequality for uniform domains.  We obtain this result as an application of a novel fractional Korn-type inequality  formulated in terms of truncated seminorms, which turns out to be valid for the broader class of John domains. We also obtain weighted estimates in which the weights are certain powers of the distance to the boundary that depend on the fractional exponent and the Assouad codimension of the boundary of the domain. 
\end{abstract}

\section{Introduction}

Let $\Omega \subset \mathbb{R}^n$ be a bounded domain and $\u: \Omega \to \mathbb{R}^n$ a vector field belonging to the Sobolev space $W^{1,p}(\Omega)^n$ with $1<p<\infty$. We denote with $\nabla \u$ the differential matrix of $\u$. The domain $\Omega$ is said to support the \emph{unconstrained} Korn inequality if
\begin{equation}
  \label{eq:kornG}
  \|\nabla \u\|_{L^p(\Omega)^{n\times n}} \leq C \left(\|\u\|_{L^p(\Omega)^n}+ \|\varepsilon(\u)\|_{L^p(\Omega)^{n\times n}}\right) \qquad  \forall \u\in W^{1,p}(\Omega)^n,
\end{equation}
where  $\varepsilon(\u)$ denotes the symmetric part of the gradient $\varepsilon(\u) = \frac{1}{2}(\nabla \u + (\nabla \u)^T)$ and $C$ is a constant depending only on $\Omega$ and $p$. Inequality \eqref{eq:kornG}  plays a central role in the mathematical theory of linear elasticity \cite{CiBook,MHbook},  in which $\Omega$ represents the volume occupied by an elastic body, $\u$ the displacement field and $\varepsilon(\u)$ the linearized strain tensor. It is well known that John domains support the \emph{unconstrained} Korn inequality \cite{ADbook}.

The Korn inequality is also known in the form
\begin{equation}
  \label{eq:korn}
  \|\nabla \u\|_{L^p(\Omega)^{n\times n}} \leq C \|\varepsilon(\u)\|_{L^p(\Omega)^{n\times n}},
\end{equation}
which is pivotal in proving the  existence and uniqueness of solutions of the linearized equations of elasticity under different boundary conditions.

Equation \eqref{eq:korn} establishes  the remarkable fact that the full gradient matrix of $\u$ can be bounded solely in terms of its symmetric part. However, that inequality  cannot hold for arbitrary vector fields in  $W^{1,p}(\Omega)^n$ due to the existence of functions in the kernel  of the linearized strain tensor $\varepsilon$ with a non-vanishing gradient.  This kernel, often called the space of \emph{infinitesimal rigid movements},   can be easily characterized as follows \cite{ADbook,CiBook,MHbook}
\begin{equation}
  \label{eq:RM}
  RM = \{ \u(x) = Ax+b \colon A\in \mathbb{R}^{n\times n}_{skew},  b \in \mathbb{R}^n \},
\end{equation}
where $ \mathbb{R}^{n\times n}_{skew}$ is the set of skew-symmetric matrices in $\mathbb{R}^{n\times n}$.

There are two classic cases, corresponding to homogeneous \emph{displacement} and \emph{traction} boundary conditions, in which \eqref{eq:korn} provides the coercivity of the linearized elasticity equations.  These cases are  known respectively as the \emph{first} and \emph{second} Korn inequalities. The \emph{first case}   states that \eqref{eq:korn} holds for vector fields with vanishing trace, that is, belonging to $W^{1,p}_0(\Omega)^n$. The  \emph{second case} establishes that \eqref{eq:korn} holds if the skew-symmetric part of $\nabla \u$ vanishes on average, i.e. for functions in  $W^{1,p}(\Omega)^n$ for which
\begin{equation}
  \int_\Omega \frac12\left(\nabla \u-(\nabla \u)^T\right) =0.
\end{equation}
The Korn inequality can also be found  in the form \cite{ADbook},
\begin{equation}
  \label{eq:korn_inf}
  \inf_{\r\in RM}\|\nabla (\u-\r)\|_{L^p(\Omega)^{n\times n}} \leq C \|\varepsilon(\u)\|_{L^p(\Omega)^{n \times n}},
\end{equation}
that is by withdrawing $RM$ from $W^{1,p}(\Omega)^n$, and sometimes, also referred to as  the \emph{second case} of the Korn inequality by some authors.

Standard compactness arguments show that  both cases  can be obtained from \eqref{eq:kornG}. However, it is not difficult to prove that the first case actually holds for \emph{any} bounded domain $\Omega$, as can be seen by extending  the fields in $W^{1,p}_0(\Omega)^n$ by zero and using the continuity properties of the Riesz transform \cite{KiOdbook} or even, in the case $p=2$, by means of elementary arguments \cite{ADbook}.   In strong contrast to the first case,  \eqref{eq:kornG} and \eqref{eq:korn_inf} are much more difficult to prove. Moreover, it is well know that they hold on John domains, being this class, in some cases, the most general class of domains for which these inequalities hold \cite{ADbook}.

The classical theory of elasticity, built upon differential operators, has failed to describe, at least without the introduction of appropriate ad-hoc assumptions,  phenomena involving singularities and discontinuities. This limitation has fostered the development of nonlocal elasticity models, such as \emph{peridynamics}  \cite{Silling2000,Silling2007}, which reformulates the laws of mechanics using integral, instead of differential, equations. In this context,  fractional Korn inequalities have gained interest within the mathematical community. In particular, in \cite{Harutyunyan2023a} the following substitutes of \eqref{eq:kornG} and \eqref{eq:korn_inf}  are studied
\begin{equation}
  \label{eq:KornGF}
  |\u|_{W^{s,p}(\Omega)^n} \le C \left( |\u|_{X^{s,p}(\Omega)^n} + \|\u\|_{L^p(\Omega)^n} \right)
\end{equation}
and
\begin{equation}
  \label{eq:Korn_infF}
  \inf_{r \in RM} |\u-\r|_{W^{s,p}(\Omega)^n} \le C |\u|_{X^{s,p}(\Omega)^n},
\end{equation}
where
\begin{equation}
  \label{eq:gagliardo_semi}
  |\u|_{W^{s,p}(\Omega)^n} := \left( \int_\Omega \int_\Omega \frac{|\u(x) - \u(y)|^p}{|x - y|^{n+sp}} \dx\dy \right)^{1/p}
\end{equation}
and
\begin{equation}
  \label{eq:meng_semi}
  |\u|_{X^{s,p}(\Omega)^n} := \left( \int_\Omega \int_\Omega \frac{|(\u(y) - \u(x)) \cdot (y - x)|^p}{|y - x|^{n+sp+p}} \dx\dy \right)^{1/p}.
\end{equation}
While \eqref{eq:gagliardo_semi} is the Gagliardo-Slobodekiij seminorm $|\u|_{W^{s,p}(\Omega)^n}$,  the less familiar expression \eqref{eq:meng_semi}  involves a nonlocal replacement for $\varepsilon(\u)$, as it is easy to see by a Taylor expansion of the vector field $\u$ around $x$
$$(\u(y) - \u(x)) \cdot (y - x) \approx (y-x)^T \varepsilon(\u)(x) (y-x).$$
Moreover, it is not difficult to prove \cite{TemamMiranville} that vector fields with vanishing $|\u|_{X^{s,p}(\Omega)^n}$ are those belonging to the set $RM$ given by \eqref{eq:RM}.

Let us briefly recall known results.  The fractional Korn inequality \eqref{eq:KornGF} has been studied for smooth domains and vector fields in the space $W^{s,p}_0(\Omega)^n$ \cite{Mengesha2022, Rut}.  In particular, the fractional \emph{first} case is studied in \cite{Harutyunyan2023b}, that is for vector fields in $W_0^{s,p}(\Omega)^n$, where it is shown to hold for $ps > 1$ and  bounded $C^1$ domains. The proof of this result relies on compactness arguments and a fractional Hardy-type inequality   that does not hold for $ps < 1$. Moreover,   in \cite{Harutyunyan2023b}, it is shown by means of a counterexample that  the fractional \emph{first} case of Korn inequality  does not hold if $ps < 1$ no matter how regular $\Omega$ is.  In \cite{Harutyunyan2023a} it is proved that $C^1$ domains, or even Lipschitz domains with a \emph{small enough} Lipschitz constant, support the fractional Korn inequalities \eqref{eq:KornGF} and \eqref{eq:Korn_infF}. 

In this work we show that \emph{uniform} domains support \eqref{eq:KornGF} and \eqref{eq:Korn_infF}, generalizing the above results, since the class of uniform domains  strictly contains that of Lipschitz domains and also some domains with fractal  boundaries such as the Koch snowflake. Moreover, they are, as proved in \cite{Jones1981}, \emph{extension} domains for classical Sobolev spaces.  We remark that, since the fractional Korn inequality proved in \cite{Harutyunyan2023a} is based on an extension argument, one could  try to extend that result to uniform domains using an extension technique similar to that developed in \cite{Jones1981}.

However, this is not the approach we use, mainly because classical Korn inequalities are known to hold on a much larger class of domains, namely, John domains, of which uniform domains are a proper subclass. But not all John domains are extension domains. Indeed, it is immediate to check that the unitary ball in $\R^2$ without the segment $\{(x,0):\, x\in(0,1)\}$ is a John domain but does not admit an extension theorem for classical Sobolev spaces. 

Therefore, we obtain the inequalities for uniform domains as a special case of an alternative form of the fractional Korn inequality given in terms of what we call \emph{truncated} seminorms, which is valid for John domains. We also obtain weighted estimates in which the weights are certain powers of the distance to the boundary. Specifically, any positive exponent is allowed, while negative exponents must be larger than an explicit parameter that depends on $s$ and the Assouad codimension of the boundary of the domain.

\section{A discrete Poincaré inequality on trees}
\label{section:discrete_poincare}
The proofs of the main results in this manuscript are based on a \emph{local-to-global} argument, in which the validity of the inequalities on cubes or other regular domains is extended to more complex domains, such as uniform or John domains. This argument requires the use of certain Poincar\'e-type inequalities on graphs, which we study in this section.

Let $G=(V,E)$ be a graph with a set of vertices $V$ and a set of edges $E$. We say that $G$ is a tree if it is connected and has no cycles. A \emph{rooted tree} is a tree where some node $t_0\in V$ is set as the root. On a rooted tree we can define a partial order $\preceq$ by saying that $r\preceq t$ if $r$ belongs to the unique path connecting $t$ with the root $t_0$. We also denote by $t_p$, the \emph{parent of $t$} as the first node in the path from $t$ to $t_0$.

Given a tree, we can consider sequences ${\bm b}=\{b_t\}_{t\in V}$ indexed over the set of vertices, with the partial order induced by $E$. A \emph{weight} is just a positive sequence over $V$. For $1\le p<\infty$ and a weight ${\bm \nu}$, we define $\ell^p(V,\nu)$ the space of sequences indexed over $V$ such that
\[\|{\bm b}\|_{p,\nu} := \left(\sum_{t\in V} |b_t|^p\nu_t\right)^\frac{1}{p}<\infty.\]

We denote by $\ell^p(V)$ the space of sequences with weight ${\bm \nu}\equiv 1$. 

In the following lemma, we establish a weighted Poincaré-type inequality on trees, whose proof is based on ideas developed in \cite{AO2016} for chains. In that work, the authors use an analogous discrete Poincaré inequality together with a local-to-global argument to obtain Korn inequalities on domains with a singularity.

\begin{lemma}\label{lemma:Poincare}
  Let $G=(V,E)$ be a rooted tree with root $t_0$ and ${\bm \nu}$, ${\bm \mu} \in \ell^1(V)$ two weights over $V$ such that the following Hardy-type inequality holds for every sequence ${\bm b}=\{b_t\}_{t\in V}$ over $V$:
  \begin{equation}\label{eq:discreteHardy}
    \left(\sum_{t\in V}\nu_t\left|\sum_{t_0\prec u\prec t}b_u\right|^p\right)^\frac{1}{p}\le C \left(\sum_{t\in V}\mu_t|b_t|^p\right)^\frac{1}{p}.
  \end{equation}
  Then, the following weighted Poincaré-type inequality holds
  \begin{equation}\label{eq:discretePoincare}
    \|{\bm b}-\bar{b}\|_{p,\nu} \le C_P \left(\sum_{t\succ t_0} |b_t-b_{t_p}|^p\mu_t\right)^\frac{1}{p},
  \end{equation}
  for all ${\bm b}$ indexed over $V$, where $\bar{b} = \frac{1}{\sum_{t\in V}\nu_t}\sum_{t\in V}b_t\nu_t$. The constant $C_P$ in \eqref{eq:discretePoincare} depends only on the constant $C$ in \eqref{eq:discreteHardy}
\end{lemma}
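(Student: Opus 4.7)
The approach is the natural telescoping-along-the-tree reduction of \eqref{eq:discretePoincare} to the Hardy hypothesis \eqref{eq:discreteHardy}.

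I would first replace the weighted mean $\bar b$ by the root value $b_{t_0}$ at the cost of a multiplicative constant. For any constant $c\in\mathbb{R}$, Jensen's inequality applied to the probability measure $\nu_t/\sum_u\nu_u$ yields $|\bar b-c|^p\sum_t\nu_t\le\sum_t\nu_t|b_t-c|^p$, which combined with the triangle inequality in $\ell^p(V,\nu)$ gives $\|\bm{b}-\bar b\|_{p,\nu}\le 2\|\bm{b}-c\|_{p,\nu}$. Taking $c=b_{t_0}$, the task is reduced to controlling $\sum_t\nu_t|b_t-b_{t_0}|^p$ by the right-hand side of \eqref{eq:discretePoincare}.

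Next, set $d_u:=b_u-b_{u_p}$ for $u\succ t_0$ and $d_{t_0}:=0$. Walking along the unique path from $t_0$ to $t$ gives the telescoping identity $b_t-b_{t_0}=\sum_{t_0\prec u\preceq t}d_u$. Substituting the sequence $\bm{d}$ into \eqref{eq:discreteHardy}, the inner sum $\sum_{t_0\prec u\prec t}d_u$ collapses to $b_{t_p}-b_{t_0}$, so \eqref{eq:discreteHardy} directly produces
\[
\Bigl(\sum_{t\in V}\nu_t|b_{t_p}-b_{t_0}|^p\Bigr)^{1/p}\le C\Bigl(\sum_{t\succ t_0}\mu_t|d_t|^p\Bigr)^{1/p},
\]
whose right-hand side already matches the one in \eqref{eq:discretePoincare}.

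To close the argument I would combine the splitting $b_t-b_{t_0}=(b_{t_p}-b_{t_0})+d_t$ with the elementary inequality $|a+b|^p\le 2^{p-1}(|a|^p+|b|^p)$; this bounds $\sum_t\nu_t|b_t-b_{t_0}|^p$ by the Hardy-derived estimate above plus a residual term $\sum_t\nu_t|d_t|^p$ coming from the off-by-one mismatch between the strict Hardy sum and the closed telescope up to $t$. I expect the clean handling of this residual to be the principal technical obstacle: it should be absorbed into the right-hand side by probing \eqref{eq:discreteHardy} with atomic test sequences supported at a single node $s$, which yield comparabilities of the form $\sum_{u\succ s}\nu_u\le C^p\mu_s$, and then reindexing the residual sum over the parent/child relation should return a bound $\sum_t\nu_t|d_t|^p\lesssim\sum_{t\succ t_0}\mu_t|d_t|^p$. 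Combining everything with the first step then yields \eqref{eq:discretePoincare} with $C_P$ depending only on $C$ and $p$, as claimed.
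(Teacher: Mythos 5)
Your first two steps---replacing $\bar{b}$ by $b_{t_0}$ via the Jensen/H\"older argument at the cost of a factor $2$, and then telescoping $b_t-b_{t_0}=\sum_{t_0\prec u\preceq t}(b_u-b_{u_p})$---coincide exactly with the paper's proof. The paper then applies \eqref{eq:discreteHardy} directly to the sequence $c_u:=b_u-b_{u_p}$, which is only valid if the inner sum in \eqref{eq:discreteHardy} is read as $\sum_{t_0\prec u\preceq t}$; the strict $\prec$ in the displayed hypothesis should be taken as a typo. With a genuinely strict $\prec$ the lemma as stated is in fact false: take the two-node tree $V=\{t_0,t_1\}$ with $t_1$ the only child. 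Then every inner sum $\sum_{t_0\prec u\prec t}b_u$ is empty, so \eqref{eq:discreteHardy} holds with arbitrarily small $C>0$ regardless of the weights, while \eqref{eq:discretePoincare} forces $C_P\gtrsim\mu_{t_1}^{-1/p}$, which cannot depend only on $C$.

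Because of this, the off-by-one residual you correctly identify cannot be absorbed in general, and your proposed fix does not close it. Probing \eqref{eq:discreteHardy} (with $\prec$) by an atomic sequence supported at a node $s$ yields $\sum_{u\succ s}\nu_u\le C^p\mu_s$, hence $\nu_t\le C^p\mu_{t_p}$ for every $t\succ t_0$---the bound lands on the \emph{parent} weight. The residual $\sum_t\nu_t|d_t|^p$ is then controlled by $C^p\sum_t\mu_{t_p}|d_t|^p$, not by $\sum_t\mu_t|d_t|^p$ as required, and there is no way to convert one to the other from \eqref{eq:discreteHardy} alone; the two-node example above shows this failure is intrinsic. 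The gap would only close under an additional doubling hypothesis $\mu_{t_p}\lesssim\mu_t$, which is true in the paper's later application (where $\mu_t=\ell_t^{n+p\gamma}$ and neighboring Whitney cubes are comparable) but is not part of this lemma. Once \eqref{eq:discreteHardy} is read with $\preceq$, as the paper's own proof implicitly does, the telescope matches the Hardy sum exactly, there is no residual, and the argument closes immediately.
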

\begin{proof}
  Let us first observe that
  \[\|{\bm b}-  \bar{b}\|_{p,\nu} \le 2 \|{\bm b}-a\|_{p,\nu},\]
  for every constant $a$. Indeed by the triangular inequality
  \[\|{\bm b}-\bar{b}\|_{p,\nu} \le \|{\bm b}-a\|_{p,\nu} + \|\bar{b}-a\|_{p,\nu},\]
  and we only need to estimate the second term. Let $\nu(V) = \sum_{t\in V}\nu_t$. Then applying the H\"older inequality
  \begin{align*}
    \bar{b}-a &= \frac{1}{\nu(V)}\sum_{t\in V}b_t\nu_t - a = \frac{1}{\nu(V)}\sum_{t\in V}(b_t-a)\nu_t \\
    &\le \frac{1}{\nu(V)}\left(\sum_{t\in V} (b_t-a)^p\nu_t\right)^\frac{1}{p}\left(\sum_{t\in V}\nu_t\right)^\frac{1}{p'}\\
    &= \nu(V)^{-\frac{1}{p}} \|{\bm b}-a\|_{p,\nu}.
  \end{align*}
  Hence
  \[\|\bar{b}-a\|_{p,\nu} = |\bar{b}-a|\nu(V)^\frac{1}{p}\le \|{\bm b}-a\|_{p,\nu}.\]

  Now, we can obtain the Poincaré-type inequality by taking $a=b_{t_0}$ and applying the Hardy-type inequality. Indeed, 

  \begin{align*}
    \|{\bm b}-\bar{b}\|_{p,\nu}^p &\le 2^p\|{\bm b}-b_{t_0}\|_{p,\nu}^p = 2^p\sum_{t\in V} |b_t-b_{t_0}|^p \nu_t \\
    &= 2^p \sum_{t\in V}\left|\sum_{t_0\prec u\preceq t} b_{u}-b_{u_p} \right|^p \nu_t,
    \intertext{where in the inner summation we sum over all the vertices in the path from $t_0$ to $t$. Now by \eqref{eq:discreteHardy}}
    \|{\bm b}-\bar{b}\|_{p,\nu}^p &\le C \sum_{t\in V}|b_t-b_{t_p}|^p\mu_t.
  \end{align*}
\end{proof}

In the theorem below, we exhibit a sufficient condition on a weighted rooted tree that implies the validy of condition \eqref{eq:discreteHardy}. This result was proved in \cite{LGO-2022}.
\begin{theorem}\label{thm:suff cond}
  Let $G=(V,E)$ be a rooted tree with root $t_0$ and $\{\mu_t\}_{t\in V}$ and $\{\nu_t\}_{t\in V}$ two weights over $V$. We consider $1<p<\infty$ and $p'$ its H\"older conjugate, $\frac{1}{p}+\frac{1}{p'}=1$. Suppose that there is some $\theta>1$ such that:
  \begin{equation}\label{eq:suffcond}
    C_{tree}:=\sup_{t\in V}\left(\sum_{t_0\prec u\preceq t}\mu_u^{-\frac{p'}{p}}\right)^\frac{1}{\theta p'}\left(\sum_{u\succeq t}\nu_u\left(\sum_{t_0\prec v\prec u}\mu_v^{-\frac{p'}{p}}\right)^{\frac{p}{p'}(1-\frac{1}{\theta})}\right)^\frac{1}{p}<\infty.
  \end{equation}
  Then, inequality \eqref{eq:discreteHardy} holds for every sequence ${\bm b} \in \ell^p(V,\mu)$. Moreover the constant in \eqref{eq:discreteHardy} satisfies $C\le \left(\frac{\theta}{\theta-1}\right)^\frac{1}{p'}C_{tree}$.
\end{theorem}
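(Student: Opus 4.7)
The plan is to reduce the Hardy-type inequality \eqref{eq:discreteHardy} to the condition \eqref{eq:suffcond} by a single application of Hölder's inequality with a carefully chosen auxiliary weight, followed by summation interchange. The parameter $\theta>1$ will enter naturally as the exponent that splits a path-integral-like quantity into two parts that match the two factors in $C_{tree}$.

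First, I introduce the cumulative weight $S_t := \sum_{t_0\prec u\preceq t}\mu_u^{-p'/p}$, which is monotone along the partial order and satisfies the discrete "differential" $S_u - S_{u_p} = \mu_u^{-p'/p}$. For a fixed $t\in V$, I write each term of the inner sum as
\[
b_u = \bigl(b_u\,\mu_u^{1/p}\,S_u^{1/(\theta p')}\bigr)\cdot\bigl(\mu_u^{-1/p}\,S_u^{-1/(\theta p')}\bigr),
\]
and apply Hölder's inequality with exponents $p$ and $p'$ to obtain
\[
\Bigl|\sum_{t_0\prec u\prec t} b_u\Bigr|^{p}\le\Bigl(\sum_{t_0\prec u\prec t} |b_u|^p \mu_u S_u^{p/(\theta p')}\Bigr)\Bigl(\sum_{t_0\prec u\prec t} \mu_u^{-p'/p}S_u^{-1/\theta}\Bigr)^{p/p'}.
\]
The choice of the exponent $1/\theta$ is dictated by the requirement that $\theta>1$, which makes $s\mapsto s^{-1/\theta}$ decreasing and integrable near zero, allowing the second sum to telescope.

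Next I bound the second factor by comparing it to a continuous integral. Since $S_u^{-1/\theta}$ is decreasing in $u$ and $(S_u - S_{u_p}) = \mu_u^{-p'/p}$, I use
\[
\mu_u^{-p'/p}S_u^{-1/\theta}\le\int_{S_{u_p}}^{S_u} s^{-1/\theta}\,ds=\frac{\theta}{\theta-1}\bigl(S_u^{(\theta-1)/\theta}-S_{u_p}^{(\theta-1)/\theta}\bigr),
\]
and telescoping along the path from $t_0$ to $t_p$ gives
\[
\sum_{t_0\prec u\prec t} \mu_u^{-p'/p}S_u^{-1/\theta}\le \frac{\theta}{\theta-1}\,S_{t_p}^{(\theta-1)/\theta}.
\]
Substituting back and raising to the $p/p'$, I get
\[
\Bigl|\sum_{t_0\prec u\prec t} b_u\Bigr|^{p}\le\Bigl(\frac{\theta}{\theta-1}\Bigr)^{p/p'} S_{t_p}^{(p/p')(1-1/\theta)}\sum_{t_0\prec u\prec t} |b_u|^p\mu_u S_u^{p/(\theta p')}.
\]

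Finally, I multiply by $\nu_t$, sum over $t$, and interchange the order of summation. Since $S_{t_p}=\sum_{t_0\prec v\prec t}\mu_v^{-p'/p}$, this produces
\[
\sum_{t\in V}\nu_t\Bigl|\sum_{t_0\prec u\prec t} b_u\Bigr|^{p}\le\Bigl(\frac{\theta}{\theta-1}\Bigr)^{p/p'}\sum_{u\in V}|b_u|^p\mu_u S_u^{p/(\theta p')}\sum_{t\succ u}\nu_t S_{t_p}^{(p/p')(1-1/\theta)}.
\]
The inner factor $S_u^{p/(\theta p')}\sum_{t\succ u}\nu_t S_{t_p}^{(p/p')(1-1/\theta)}$ is exactly the $p$-th power of the expression whose supremum over $u$ equals $C_{tree}$ in \eqref{eq:suffcond} (enlarging $t\succ u$ to $t\succeq u$ only strengthens the hypothesis). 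Hence every term inside the outer sum is bounded by $C_{tree}^{p}\,\mu_u|b_u|^{p}$, and extracting the $p$-th root yields \eqref{eq:discreteHardy} with constant $\bigl(\theta/(\theta-1)\bigr)^{1/p'}C_{tree}$, as claimed. The main subtlety is the alignment of exponents in the Hölder splitting: the precise choice $\alpha=1/\theta$ is what makes the resulting double-sum after interchange reproduce, term by term, the two factors that appear in $C_{tree}$; no other decomposition leads to a sharp match with the hypothesis.
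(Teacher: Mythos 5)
The paper states Theorem~\ref{thm:suff cond} as a citation to \cite{LGO-2022} and does not reproduce a proof, so there is no internal argument to compare against. Your proof is correct, and it follows the standard scheme for weighted Hardy inequalities with a Persson--Stepanov-type ``bump'' parameter $\theta$, adapted to trees: the H\"older splitting with the auxiliary factor $S_u^{\pm 1/(\theta p')}$ (where $S_u=\sum_{t_0\prec w\preceq u}\mu_w^{-p'/p}$), the integral-comparison telescoping of $\sum_{t_0\prec u\prec t}\mu_u^{-p'/p}S_u^{-1/\theta}\le\tfrac{\theta}{\theta-1}S_{t_p}^{1-1/\theta}$, and the Fubini-type interchange that reassembles the two factors of $C_{tree}$ at the node $u$. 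All exponent bookkeeping checks out, the enlargement of $\{t\succ u\}$ to $\{t\succeq u\}$ is indeed harmless since the summands are nonnegative, and extracting the $p$-th root yields exactly the claimed constant $\bigl(\theta/(\theta-1)\bigr)^{1/p'}C_{tree}$. One cosmetic remark: the H\"older step should formally be applied after first passing to $\bigl|\sum b_u\bigr|\le\sum|b_u|$, since $b_u$ can be negative; this changes nothing in substance.
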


\begin{remark}
In the manuscript \cite{LGR-2025}, the authors introduce a condition on weighted graphs with ${\bm \nu}={\bm \mu}$, similar to the one appearing in the upcoming chapter in Lemma \ref{lemma:treeJohn}, which implies the validity of a discrete Poincar\'e-type inequality analogous to \eqref{eq:discretePoincare}. 
\end{remark}

\section{Domains and Trees}\label{section:domains_trees}
Now we turn our attention to the classes of domains we will be working with. We begin by recalling some definitions.
\begin{definition}[Uniform Domain (see for example \cite{Jones1981}]
    A domain $D \subset \mathbb{R}^n$ is said to be \textbf{uniform} if there exists a constant $c \geq 1$ such that for any two points $x_1, x_2 \in D$, there is a rectifiable arc $\gamma \subset D$ joining them such that:
    \begin{enumerate}
      \item $\ell(\gamma) \leq c |x_1 - x_2|$, and
      \item $ \min \{ \ell(\gamma(x_1, x)), \ell(\gamma(x, x_2)) \} \leq c  \, d(x, \partial D)$ for all $x \in \gamma$,
    \end{enumerate}
    where $\ell(\gamma)$ is the arc length, $\gamma(x_i, x)$ is the sub-arc of $\gamma$ between $x_i$ and $x$, and $d(x, \partial D)$ is the  distance from $x$ to the boundary of $D$.
  \end{definition}

  \begin{definition}[John Domain (see for example \cite{ADbook}]
      A bounded domain $D \subset \mathbb{R}^n$ is called a \textbf{John domain} if there exists a constant $C \geq 1$ and a distinguished point $x_0 \in D$ such that every point $x \in D$ can be joined to $x_0$ by a rectifiable curve $\gamma: [0, \ell] \to D$, parameterized by its arc length, such that $\gamma(0) = x$, $\gamma(\ell) = x_0$, and
      \begin{equation}
        d(\gamma(t), \partial D) \geq \frac{1}{C} t
      \end{equation}
      for all $t \in [0, \ell]$, where $d(\cdot, \partial D)$ denotes the   distance to the boundary of $D$.
    \end{definition}

    There are several equivalent definitions of John domains. For our local-to-global argument we will use a definition based on trees of Whitney cubes.

    Let us recall that a Whitney decomposition of a bounded domain $\Omega\subset\R^n$ is a collection $\{Q_t\}_{t\in\Gamma}$ of closed dyadic cubes whose interiors are pairwise disjoint, such that the edge-length of $Q_t$ is proportional to its distance to $\partial\Omega$ and two neighboring cubes are proportional in size. For example, we can take
    \begin{enumerate}
      \item $\O=\bigcup_{t\in\Gamma}Q_t$,
      \item $3 \diam(Q_t) \leq d(Q_t,\partial\Omega) \leq 8\diam(Q_t)$,
      \item $\frac{1}{2}\diam(Q_u)\leq \diam(Q_t)\leq 2\diam(Q_u)$, if $Q_u\cap Q_t\neq \emptyset$,
    \end{enumerate}
    where $\diam(Q_t)$ stands for the diameter of the cube $Q_t$. Two distinct cubes $Q_u$ and $Q_t$ with $Q_u\cap Q_t\neq \emptyset$ are called {\it neighbors}. Notice that two neighbors may have an intersection with dimension less than $n-1$. For instance, they may intersect at a single point. We say that $Q_u$ and $Q_t$ are \mbox{$(n-1)$}-neighbors if $Q_u\cap Q_t$ is a $n-1$ dimensional face. This kind of covering exists for any proper open subset of $\R^n$ (see \cite{Stein1970} for details).

    The set of indices $\Gamma$ is numerable. It can be endowed with a tree structure, where the vertices of the tree are identified with the cubes and each edge connects two $(n-1)$-neighbouring cubes. However, only some $(n-1)$-neighbours induce an edge in the tree. Edges are added in a way such that all cubes are connected, but no cycles are formed. Another way of thinking about this tree construction is to consider the complete graph where all vertices (cubes) and all edges are present and then extracting a \emph{spanning tree} from this graph. A tree thus constructed, with a distinguished node as root, can be identified with a partial order along the set $\Gamma$, as we did in Section \ref{section:discrete_poincare}. Figure \ref{fig:whitney_tree_shadow} shows a Whitney decomposition of a domain, and indicates a possible tree structure built upon it.

    For every vertex $t\in \Gamma$, we define \emph{the shadow} of the cube $Q_t$ as the set:
    \begin{equation}\label{eq:shadow}
      W_t := \cup_{u\succeq t}Q_u.
    \end{equation}

    In Figure \ref{fig:whitney_tree_shadow} the shadow of the node $t$, corresponding to the cube $Q_t$, is marked along the tree.

    \begin{figure}[h]
      \centering
      \includegraphics[width=0.5\textwidth]{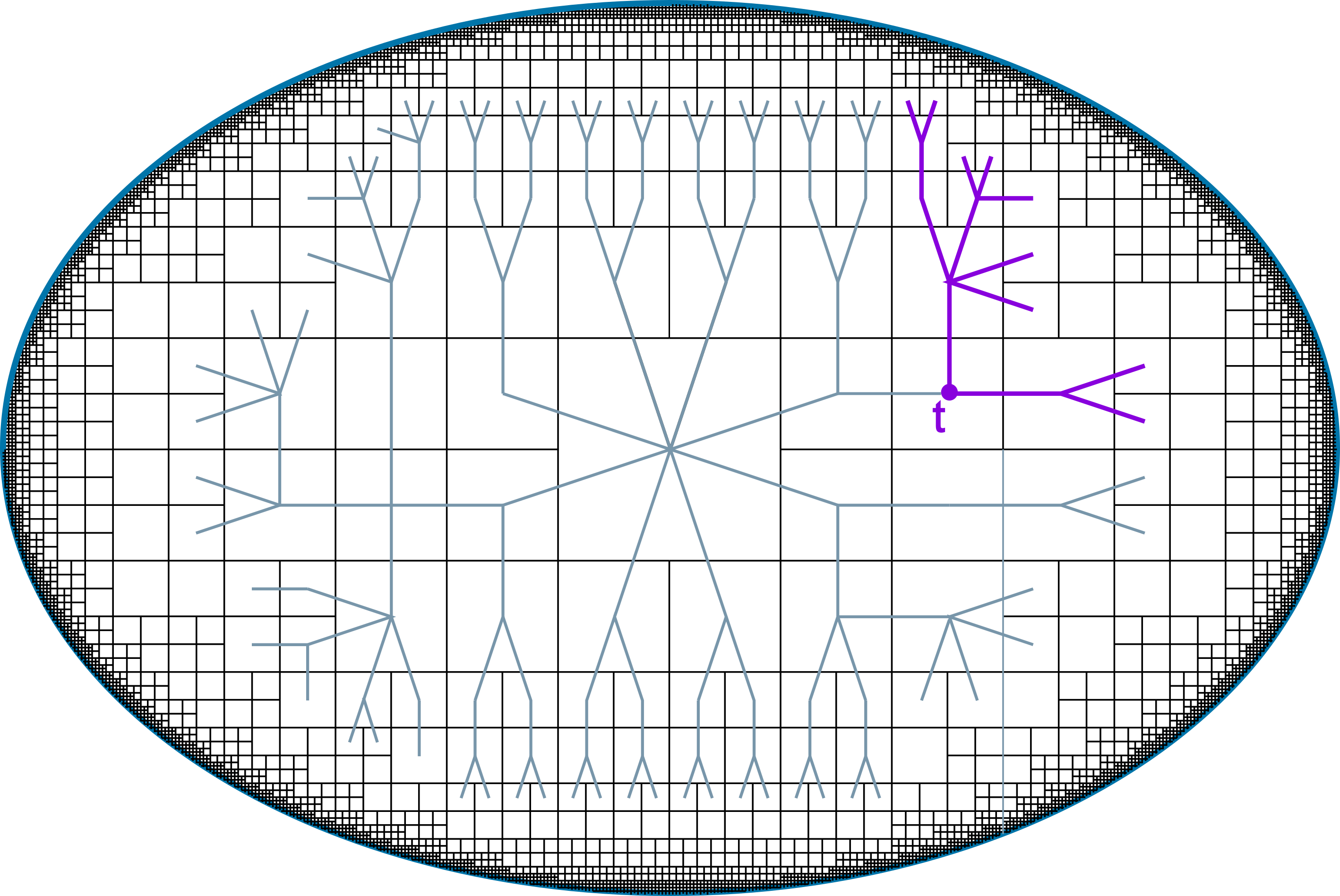}
      \caption{A Whitney decomposition, with a possible underlying tree-structure. The \emph{shadow} of the cube $Q_t$ is sketched.}
      \label{fig:whitney_tree_shadow}

    \end{figure}

    Naturally, there are infinitely many tree structures satisfying the description above for the same Whitney decomposition. The idea is to build a tree that contains relevant information about the domain geometry. The following result provides such a tree for John domains. It was proven in \cite[Lemma 5.2]{LG-2018}.

    \begin{lemma}\label{lemma:treeJohn} A bounded domain $\O$ in $\R^n$ is a John domain if and only if given a Whitney decomposition $\{Q_t\}_{t\in\Gamma}$ of $\O$, there exist a tree structure for the set of indices $\Gamma$ and a constant $K>1$
      that satisfy
      \begin{align}\label{Boman tree}
        Q_u\subseteq KQ_t,
      \end{align}
      for any $u,t\in\Gamma$ with $u\succeq t$. In other words, the shadow of
      $Q_t$ is contained in the expanded set $KQ_t$. Moreover, the intersection of the cubes associated to adjacent indices, $Q_t$ and $Q_{t_p}$, is an $n-1$ dimensional face of one of these  cubes.
    \end{lemma}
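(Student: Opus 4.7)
The plan is to prove the two directions separately. For the easy direction ($\Leftarrow$), I would show that if such a tree exists, then $\Omega$ is John with distinguished point $x_0$ chosen to be the center of the root cube $Q_{t_0}$. For the harder direction ($\Rightarrow$), I would construct the tree using the John curves emanating from the distinguished point.

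For the $\Leftarrow$ direction, given $x\in\Omega$, let $u\in\Gamma$ be such that $x\in Q_u$, and list the ancestors $u=v_N\succ v_{N-1}\succ\cdots\succ v_0=t_0$. Because consecutive cubes $Q_{v_j}$ and $Q_{v_{j-1}}$ share an $(n-1)$-face, I can build a polygonal curve $\gamma$ from $x$ to $x_0$ that, inside each $Q_{v_j}$, consists of at most two straight segments joining $x$ (resp.\ the entry face) to the exit face with the next cube. The arc length traversed from $x$ until the curve leaves $Q_{v_k}$ is then controlled by $\sum_{j=k}^{N}\diam(Q_{v_j})$; Whitney property (2) together with the shadow containment $Q_{v_j}\subseteq KQ_{v_k}$ for $j\geq k$ (which forces $\diam(Q_{v_j})\leq K\diam(Q_{v_k})$ and, by disjointness of interiors, also controls how many cubes of a given scale can appear) yields an estimate of the form $\ell(\gamma|_{\text{from } x \text{ to } Q_{v_k}})\leq C(K,n)\diam(Q_{v_k})$. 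Since property (2) of the Whitney decomposition guarantees $d(y,\partial\Omega)\gtrsim\diam(Q_{v_k})$ for every $y\in Q_{v_k}$, this gives the John cone condition with constant depending only on $K$ and $n$.

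For the $\Rightarrow$ direction, let $x_0$ be the John center and $Q_{t_0}$ the Whitney cube containing it. For every other cube $Q_u$, pick the John curve $\gamma_u$ from (the center of) $Q_u$ to $x_0$, and read off the finite sequence of Whitney cubes it meets. I would then define a \emph{parent function} $u\mapsto u_p$ in a greedy way: $Q_{u_p}$ is chosen among the $(n-1)$-neighbors of $Q_u$ that the curve $\gamma_u$ crosses next, with a canonical tie-breaking rule ordering cubes by their John distance to $x_0$. Processing cubes in order of increasing John distance to $x_0$ ensures that when we set $u_p$ the parent's John distance is strictly smaller, so no cycles arise and the resulting structure is a tree rooted at $t_0$. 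To verify the shadow bound, take any $u\succeq t$; tracing from $u$ to $t_0$ along the tree one passes through $Q_t$, and the John property forces every cube encountered between $Q_u$ and $Q_t$ to lie in a ball of radius comparable to $\diam(Q_t)$ centered at a point of $Q_t$. Consequently every point of $Q_u$ stays within $K\diam(Q_t)$ of $Q_t$ for a suitable $K=K(C,n)$, which is exactly $Q_u\subseteq KQ_t$.

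The main obstacle is the $\Rightarrow$ direction: constructing a globally consistent tree in which the shadow-containment constant $K$ is uniform, while simultaneously enforcing the $(n-1)$-face requirement. The difficulty is that the John curve need not cross $(n-1)$-faces cleanly (it may pass through lower-dimensional intersections, or graze corners), so the greedy parent rule has to be supplemented by short detours through intermediate $(n-1)$-neighbors whose size is controlled by the Whitney comparability property (3). Making these detours short enough that they do not blow up $K$ is the delicate point; once this is done, the tree inherits enough geometric information to encode the John condition, and \eqref{Boman tree} follows.
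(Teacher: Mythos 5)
The paper offers no proof of this lemma; it is stated and attributed directly to \cite[Lemma~5.2]{LG-2018}, so there is no in-paper argument to compare yours against. Your $\Leftarrow$ direction is essentially sound: the $(n-1)$-face condition lets you build a polygonal curve through the chain, and the shadow containment bounds the number of chain cubes at each dyadic scale above $Q_{v_k}$ by a constant $C(K,n)$ (take the earliest chain cube $Q_{v_{j_1}}$ at that scale: every later chain cube of the same scale lies in $KQ_{v_{j_1}}$ and has disjoint interior), so $\sum_{j\ge k}\diam(Q_{v_j})$ is a convergent geometric series $\lesssim \diam(Q_{v_k})$; combined with Whitney property (2) this yields the John cone condition.

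Your $\Rightarrow$ direction has a gap beyond the corner-grazing issue you flag. You set $u_p$ by advancing one step along $\gamma_u$, but the grandparent is then found by advancing along $\gamma_{u_p}$, a different John curve; the ancestor chain from $u$ to $t_0$ therefore does not track any single John curve. Your shadow argument (``the John property forces every cube encountered between $Q_u$ and $Q_t$ to lie in a ball of radius comparable to $\diam(Q_t)$'') implicitly assumes that this chain stays near $\gamma_u$, which the greedy rule does not guarantee: an intermediate ancestor $w$ may, following its own $\gamma_w$, head away from $Q_t$ before eventually converging to $x_0$. Closing this requires a further quantitative ingredient --- for instance, that side-lengths along any parent chain increase essentially geometrically, so the concatenated chain is itself a John-type chain with constant controlled by that of $\O$ --- which is the actual substance of the proof in \cite{LG-2018}. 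The corner-grazing problem you identify, by contrast, is secondary: detouring through $O(1)$ intermediate Whitney neighbors of comparable size (Whitney property (3)) costs only a bounded factor in $K$ and does not threaten uniformity.
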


    In Section \ref{section:korn_john} we prove the validity of the discrete Poincaré inequality \eqref{eq:discretePoincare} on the tree given by Lemma \ref{lemma:treeJohn}. This allows us to extend the Korn inequality from cubes to a variation of the same inequality on John domains. 
    
    However, the Korn inequality obtained in \cite[Theorem 1.3]{Harutyunyan2023a} is not proved in cubes, but rather in $\mathcal{C}^1$ domains (or Lipschitz domains with a \emph{sufficiently small constant}). While proving the same result for cubes is possible using arguments analogous to the ones in that paper, doing so would require extensive rewriting of most of the proofs. To circumvent this limitation we introduce \emph{smoothened cubes} as a technical device.

      Let us define
      \begin{equation}\label{eq:hatQ}
        \hat{Q}:=\left\{x\in \R^n\colon \|x\|_\infty\leq 1\right\},
      \end{equation}
      the cube with edge-length $2$ centered at the origin. Observe that a cube $Q$ with edges parallels to the coordinate axes, edge-length $\ell_Q$ and center $q$  can be written as:
      \[Q = \tfrac{\ell_Q}{2} \hat{Q} + q.\]

      In a similar way, we define the \emph{smoothened cube}
      \begin{equation}\label{def:smothen_reference_cube}
        \hat{U}:=\{x\in \R^n\colon \|x\|_N\leq 1\},
      \end{equation}
      for some $N<\infty$ to be defined later.

      We will work with smoothened cubes of the form:
      \begin{equation}\label{eq:general_smoothen}
        U = \tfrac{\ell_U}{2}\hat{U} + q.
      \end{equation}
      By an abuse of notation, we say that $\ell_U$ is the edge-length of $U$.

     In the next section, we prove some results regarding skew-symmetric matrices on smoothened cubes.

    \section{Some preliminary results on skew-symmetric matrices}

    Let $\Omega\subset \R^n$ a bounded domain and $0<s<1$. For $\u\in W^{s,p}(\Omega)^n$, $\v\in W^{s,p'}(\Omega)^n$, and $E\subset \Omega$ we define the bilinear form
    \[\langle \u,\v \rangle_{E\times E} = \int_E\int_E \frac{(\u(x)-\u(y))\cdot(\v(x)-\v(y))}{|x-y|^{n+2s}} \dy\dx.\]

    Note that $n+2s = \frac{n}{p}+s+\frac{n}{p'}+s$, so the H\"older inequality yields $\langle \u,\v\rangle \le  |\u|_{W^{s,p}(\Omega)^n}|\v|_{W^{s,p'}(\Omega)^n}$, which gives the continuity of $\langle,\rangle:W^{s,p}(E)^n\times W^{s,p'}(E)^n\to \R$.

    When the domains of integration are both the whole domain $\Omega$ we simplify the notation by writing $\langle \u,\v \rangle$ instead of $\langle \u,\v \rangle_{\Omega\times\Omega}$. We also define $I_{ij}\in\R^{n\times n}$ the matrix such that:
    \[(I_{ij})_{km} = \left\{
        \begin{array}{cl} 1 & \textrm{ if } k=i,m=j \\
          -1 & \textrm{ if } k=j,m=i\\
          0  & \textrm{ elsewhere}.
      \end{array}\right.\]

      Note that $\{I_{ij}\}_{1\le i<j\le n}$ forms a basis of the skew-symmetric matrices of $n\times n$. We denote $\I_{ij}$ the linear transform $\I_{ij}(x) = I_{ij}x$.

      \begin{lemma}\label{lemma:orthogonality} If we denote by $\delta_{ik}$ a Kronecker delta we have that
        \[\langle \I_{ij},\I_{km} \rangle = |\I_{ij}|_{W^{2,s}(\Omega)^n}^2 \delta_{ik}\delta_{km}.\]
      \end{lemma}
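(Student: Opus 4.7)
My plan is to reduce the bilinear form to a pointwise dot product via the linearity of $\I_{ij}$, expand it using Kronecker deltas, and then kill off-diagonal contributions by a symmetry argument. Since $\I_{ij}(x)-\I_{ij}(y)=I_{ij}(x-y)$, writing $z=x-y$ and using $(I_{ij}z)_l=\delta_{li}z_j-\delta_{lj}z_i$, I would first compute
\[
(I_{ij}z)\cdot(I_{km}z)=\sum_l(\delta_{li}z_j-\delta_{lj}z_i)(\delta_{lk}z_m-\delta_{lm}z_k)=\delta_{ik}z_jz_m-\delta_{im}z_jz_k-\delta_{jk}z_iz_m+\delta_{jm}z_iz_k.
\]
When the index pairs coincide, $(i,j)=(k,m)$ with $i<j$, the mixed deltas $\delta_{im}$ and $\delta_{jk}$ vanish and the expression collapses to $z_i^2+z_j^2=|I_{ij}z|^2$; dividing by $|x-y|^{n+2s}$ and integrating over $\Omega\times\Omega$ produces exactly $|\I_{ij}|_{W^{s,2}(\Omega)^n}^2$, as required.

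For index pairs with $(i,j)\neq(k,m)$, each surviving summand has the form $\pm z_a z_b$ with $a\neq b$, so the claim reduces to verifying that
\[
\int_\Omega\int_\Omega\frac{z_a z_b}{|x-y|^{n+2s}}\dy\dx=0\qquad\text{whenever }a\neq b.
\]
Here the geometry of $\Omega$ enters. Because the integrand depends on $x$ and $y$ only through $z=x-y$, I may translate so that $\Omega$ is centered at the origin; on a smoothened cube (the setting in which this lemma is to be used), the translated domain is invariant under the coordinate reflection $x_a\mapsto -x_a$, $y_a\mapsto -y_a$. This reflection sends $z_a$ to $-z_a$, leaves $z_b$ ($b\neq a$) and $|x-y|$ unchanged, and preserves $\Omega\times\Omega$, so the integrand is odd in the $a$-coordinate and the integral equals its own negative, hence vanishes.

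The bulk of the work is a short computation; the genuine point of care is the role of $\Omega$. Stated verbatim for an arbitrary bounded domain the orthogonality is false, so in writing the proof I would either restrict $\Omega$ at the outset to a domain with the required coordinate-reflection symmetry, or explicitly note that the lemma is applied in the sequel only to smoothened cubes, where translation to the origin followed by the reflection argument above is legitimate. With that caveat in place, combining the diagonal computation with the vanishing of the off-diagonal integrals yields the stated orthogonality relation.
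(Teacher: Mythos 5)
Your argument is correct and follows the same overall structure as the paper's proof — expand the pointwise dot product $(I_{ij}z)\cdot(I_{km}z)$ into Kronecker-delta terms and kill the cross terms by a symmetry of the double integral — but the symmetry you invoke is different from, and in fact more robust than, the one in the paper. The paper disposes of the off-diagonal integrals by ``the change of variables that interchanges $x_j$ with $y_j$,'' i.e.\ swapping the $j$-th coordinates of $x$ and $y$; this sends $(x_j-y_j)\mapsto-(x_j-y_j)$ as desired, but it preserves $\Omega\times\Omega$ only when $\Omega$ has a product structure in direction $j$, which holds for a standard cube but fails for a ball or a smoothened cube $\{\|x-q\|_N\le r\}$ with $N<\infty$. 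Your reflection $x_a\mapsto-x_a$, $y_a\mapsto-y_a$ after centering at the origin preserves $\Omega\times\Omega$ for any domain with reflection symmetry in the $a$-coordinate, which covers balls and the smoothened cubes $U_t$ to which the lemma is actually applied in the proof of Theorem \ref{thm:rest_fractional_Korn_John}; so your version of the argument is the one that actually closes the step where it is needed (in particular for Corollary \ref{corol:Iijcubes} and the formulas $a_{ij}^t=|\I_{ij}|^{-2}_{W^{s,2}(U_t)^n}\langle\u,\I_{ij}\rangle_{U_t\times U_t}$). Your caveat is also well taken: as stated for an arbitrary bounded $\Omega$ the orthogonality is false (an ellipsoid elongated along a diagonal direction gives $\int_\Omega\int_\Omega z_az_b\,|z|^{-n-2s}\,\dy\dx\neq0$), so the lemma should be read as holding for domains with the relevant coordinate symmetries, which is precisely the setting in which the paper uses it. One small side remark on the paper's own phrasing, which you implicitly correct: the claim that $I_{ij}(x-y)\cdot I_{km}(x-y)=0$ whenever $i\ne k$ and $j\ne m$ should really be stated for $\{i,j\}\cap\{k,m\}=\emptyset$; your expansion makes clear that the remaining cases with exactly one common index produce a nonzero pointwise product $\pm z_az_b$ with $a\ne b$, which is then handled by the symmetry argument.
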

      \begin{proof}
        Let  $e_i$ be the canonical vector that has $1$ in the $i$-th coordinate and $0$ elsewhere. Then, $I_{ij}x = x_je_i-x_ie_j$. Hence, if $i\neq k$ and $j\neq m$, $I_{ij}(x-y)\cdot I_{km}(x-y) = 0$ and, consequently, $\langle I_{ij},I_{km}\rangle = 0$.

        On the other hand, if $i=k$ and $j\neq m$, then $I_{ij}(x-y)\cdot I_{im}(x-y) = (x_j-y_j)(x_m-y_m)$, which yields
        \[\langle \I_{ij},\I_{im} \rangle = \int_\Omega\int_\Omega \frac{(x_j-y_j)(x_m-y_m)}{|x-y|^{n+2s}} \dy\dx.\]
        Now, we can make the change of variables that interchanges $x_j$ with $y_j$ which gives
        \[\langle \I_{ij},\I_{im} \rangle = \int_\Omega\int_\Omega \frac{(y_j-x_j)(x_m-y_m)}{|x-y|^{n+2s}} \dy\dx =-\langle \I_{ij},\I_{im} \rangle,\]
        and, hence, $\langle \I_{ij},\I_{im} \rangle = 0$.
        The cases $j=m\land i\neq k$, $i=m\land j\neq k$ and $j=k\land i\neq m$ are similar.

        Finally, if $k=i$, $m=j$ we have that  $\langle \I_{ij},\I_{ij} \rangle = |\I_{ij}|_{W^{2,s}(\Omega)}^2$.
      \end{proof}

      \begin{lemma}\label{lemma:seminormIijcubes}
        If $U$ is a smoothened cube with edge length $\ell_U$, as defined by \eqref{eq:general_smoothen}, then
        \[|\I_{ij}|_{W^{s,p}(U)^n} = \hat{C}\ell_U^{\frac{n}{p}+1-s},\]
        where $\hat{C}$ is a constant depending on $n,p$ and $s$ but not on $i,j$ or $\ell_U$.
      \end{lemma}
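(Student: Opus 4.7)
The plan is to reduce the claim to a direct scaling computation on the reference smoothened cube $\hat{U}$, using the explicit form of $\I_{ij}$.

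First I would note that $\I_{ij}(x)=x_j e_i - x_i e_j$, so that
\[
\I_{ij}(x)-\I_{ij}(y)=(x_j-y_j)e_i-(x_i-y_i)e_j
\qquad\text{and}\qquad
|\I_{ij}(x)-\I_{ij}(y)|^p=\bigl((x_j-y_j)^2+(x_i-y_i)^2\bigr)^{p/2}.
\]
Substituting into the definition of the Gagliardo seminorm and then applying the affine change of variables $x=\tfrac{\ell_U}{2}\hx+q$, $y=\tfrac{\ell_U}{2}\hy+q$, which maps $\hat{U}$ onto $U$, is the main step. The Jacobian contributes $(\ell_U/2)^{2n}$; every coordinate difference $x_k-y_k$ scales by $\ell_U/2$, giving $(\ell_U/2)^{p}$ in the numerator; and $|x-y|^{n+sp}$ scales by $(\ell_U/2)^{n+sp}$ in the denominator. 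Collecting the exponents yields $(\ell_U/2)^{2n+p-n-sp}=(\ell_U/2)^{\,n+p(1-s)}$, which after the $p$-th root produces exactly the advertised power $\ell_U^{\,n/p+1-s}$ (up to a factor absorbed into $\hat{C}$).

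What is left is the dimensionless integral
\[
C_{ij}:=\int_{\hat{U}}\int_{\hat{U}}\frac{\bigl((\hx_j-\hy_j)^2+(\hx_i-\hy_i)^2\bigr)^{p/2}}{|\hx-\hy|^{n+sp}}\,\dhx\dhy,
\]
for which I would check (i) finiteness and (ii) independence from the choice of indices $i,j$. For (i), near the diagonal the integrand is controlled by $|\hx-\hy|^{\,p-n-sp}$, and since $s<1$ this exponent exceeds $-n$, so the singularity is locally integrable; away from the diagonal both $\hat{U}$ and the integrand are bounded. For (ii), I would invoke the fact that the reference smoothened cube $\hat{U}=\{\|x\|_N\le 1\}$ is invariant under any permutation of the coordinate axes; the transposition swapping coordinates $(i,j)$ with any other pair $(i',j')$ is a measure-preserving bijection of $\hat{U}\times\hat{U}$ carrying the integrand for one pair onto the integrand for the other, so $C_{ij}$ is one and the same constant for every ordered pair with $i\neq j$.

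There is no genuine obstacle here: the whole argument amounts to scaling bookkeeping plus the observation that the symmetries of $\hat{U}$ under coordinate permutations are exactly what makes the dimensionless constant independent of $(i,j)$. The only mild point that must not be glossed over is verifying the integrability exponent $p-n-sp>-n$, which is where the hypothesis $s<1$ is used, and making sure the affine change of variables is applied uniformly to both integration variables so that the shift $q$ cancels out of every difference and only the scaling $\ell_U/2$ survives.
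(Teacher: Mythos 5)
Your proposal is correct and follows essentially the same route as the paper: compute $|\I_{ij}(x)-\I_{ij}(y)|^p=((x_i-y_i)^2+(x_j-y_j)^2)^{p/2}$, rescale via the affine map taking $\hat U$ to $U$ to extract the factor $\ell_U^{n+p(1-s)}$, and invoke the permutation symmetry of $\hat U$ (since $\|\cdot\|_N$ is symmetric in the coordinates) to conclude that the remaining dimensionless integral is independent of $(i,j)$. The only addition over the paper's argument is your explicit check that $p-n-sp>-n$ guarantees local integrability near the diagonal, which is a harmless and worthwhile verification.
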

      \begin{proof}
        For $x,y \in U$, we make the change of variables $\hat{x} = \ell_U^{-1}(x-q)$, $\hat{y}=\ell_U^{-1}(y-q)$. Notice that this maps $U$ onto $\hat{U}$. Then:
        \begin{align*}
          |\I_{i,j}|_{W^{s,p}(U)^n}^p &= \int_U\int_U \frac{|I_{ij}(x-y)|^p }{|x-y|^{n+ps}}\dy\dx \\
          &= \int_U\int_U \frac{|(x_i-y_i)^2 + (x_j-y_j)^2|^\frac{p}{2}}{|x-y|^{n+ps}}\dy\dx \\
          &= \int_{\hat{U}}\int_{\hat{U}} \frac{\ell_U^p [(\hat{x}_i-\hat{y}_i)^2 + (\hat{x}_j-\hat{y}_j)^2]^\frac{p}{2}}{\ell_U^{n+ps}|\hat{x}-\hat{y}|^{n+ps}} \ell_U^{2n}\dhy\dhx\\
          &= \ell_U^{n+p-ps}\int_{\hat{U}}\int_{\hat{U}} \frac{[(\hat{x}_i-\hat{y}_i)^2 + (\hat{x}_j-\hat{y}_j)^2]^\frac{p}{2}}{|\hat{x}-\hat{y}|^{n+ps}} \dhy\dhx\\
          &= \ell_U^{n+p-ps}|\I_{ij}|_{W^{s,p}(\hat{U})^n}^p.
        \end{align*}
        By the symmetry of $\hat{U}$ and taking into account that $1\le i< j\le n$, it is clear that we can change, for example, the index $j$ by any other index $k>i$ and the value of the integral does not change. Consequently we can write $\hat{C} = |I_{ij}|_{W^{s,p}(\hat{U})^n}$ which is a constant independent of $i,j$.
      \end{proof}

      \begin{corollary}\label{corol:Iijcubes}
        If $U$ is a smoothened cube as in \eqref{eq:general_smoothen}, then
        \[\langle \I_{ij},\I_{km} \rangle_{U\times U} = \hat{C}\ell_U^{n+2-2s}\delta_{ik}\delta_{jm},\]
        where $\hat{C}$ is a constant depending on $n,p$ and $s$ but not on $i,j$ or $\ell_U$.
      \end{corollary}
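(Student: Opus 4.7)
The statement is an immediate corollary: it follows from combining Lemmas \ref{lemma:orthogonality} and \ref{lemma:seminormIijcubes} applied to the smoothened cube $U$. My plan is first to observe that the kernel $|x-y|^{-(n+2s)}$ appearing in the bilinear form $\langle\cdot,\cdot\rangle$ coincides with the Gagliardo exponent $|x-y|^{-(n+ps)}$ exactly when $p=2$. Consequently, for any vector field $\v$ on $U$, we have $\langle\v,\v\rangle_{U\times U}=|\v|_{W^{s,2}(U)^n}^{2}$, which identifies the diagonal of the bilinear form with the square of the $W^{s,2}$-seminorm.

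Next, I would invoke Lemma \ref{lemma:orthogonality} with $\Omega=U$; the lemma is stated for an arbitrary bounded domain, so this substitution is legitimate. Noting that the proof of that lemma actually produces the factor $\delta_{ik}\delta_{jm}$ rather than $\delta_{ik}\delta_{km}$ (the latter being a typo in its statement), we obtain
\[
\langle\I_{ij},\I_{km}\rangle_{U\times U} \;=\; |\I_{ij}|_{W^{s,2}(U)^n}^{2}\,\delta_{ik}\delta_{jm}.
\]
Finally, I would apply Lemma \ref{lemma:seminormIijcubes} with $p=2$, which gives $|\I_{ij}|_{W^{s,2}(U)^n}=\hat{C}\,\ell_{U}^{n/2+1-s}$ for a constant $\hat{C}$ depending only on $n$ and $s$ and independent of $i,j,\ell_U$. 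Squaring and relabeling $\hat{C}^{2}$ as $\hat{C}$ (which remains a constant of the same nature) yields the claimed formula.

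There is essentially no technical obstacle: the argument is a two-line composition of already established facts, and the only bookkeeping is to check the index pattern $\delta_{ik}\delta_{jm}$ and the exponent arithmetic $2\bigl(\tfrac{n}{p}+1-s\bigr)=n+2-2s$ when $p=2$. No new analytic ideas are required.
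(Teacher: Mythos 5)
Your proof is correct and matches the paper's own argument exactly: the paper also proves this corollary by combining Lemma \ref{lemma:orthogonality} with Lemma \ref{lemma:seminormIijcubes} at $p=2$. Your observation that the statement of Lemma \ref{lemma:orthogonality} contains the typo $\delta_{ik}\delta_{km}$ where its proof actually yields $\delta_{ik}\delta_{jm}$ is also correct.
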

      \begin{proof}
        The result follows by combining Lemmas \ref{lemma:orthogonality} and \ref{lemma:seminormIijcubes} with $p=2$.
      \end{proof}

      Now, for $\u \in W^{s,p}(\Omega)^n$ we define $\Pi_\Omega \u$ as the projection of the $s$ gradient of $\u$ onto the skew-symmetric matrices. Namely
      \[\Pi_\Omega\u = \sum_{1\le i<j\le n}u_{ij}\I_{ij},\]
      such that $\langle \u-\Pi_\Omega\u,\I_{ij}\rangle = 0$ for every $1\le i<j\le n$. Given the orthogonality of the linear transformations $\I_{ij}$ with respect to $\langle,\rangle$ it is easy to check that
      \[u_{ij} = |\I_{ij}|^{-2}_{W^{s,2}(\Omega)^n}\langle \u,\I_{ij}\rangle. \]

      \begin{proposition}
        Let $\Omega\subset \R^n$ a bounded domain, and $r$ and $R$ be the interior and exterior diameters of $\Omega$ respectively. Then
        \[|\u-\Pi_\Omega\u|_{W^{s,p}(\Omega)^n} \le C \left(\tfrac{R}{r}\right)^{n+2-2s}|\u-\r|_{W^{s,p}(\Omega)^n},\]
        for every $\r \in RM$.
      \end{proposition}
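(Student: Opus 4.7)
The plan is to reduce the claim to an operator-norm bound on $\Pi_\Omega$ with respect to the Gagliardo seminorm, and then to exploit the scaling of $|\I_{ij}|_{W^{s,q}}$ on smoothened cubes that sandwich $\Omega$.

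For any $\r\in RM$, write $\r(x)=Ax+b$ with $A=\sum_{i<j}a_{ij}I_{ij}$ skew-symmetric. Because $\langle\cdot,\cdot\rangle$ annihilates the constant $b$ and, by Lemma~\ref{lemma:orthogonality}, the $\I_{ij}$ are $\langle\cdot,\cdot\rangle$-orthogonal, one has $\Pi_\Omega\r = A\cdot$, so that
\[
\u-\Pi_\Omega\u \;=\; (\u-\r) - \Pi_\Omega(\u-\r) + b.
\]
Since the seminorm $|\cdot|_{W^{s,p}(\Omega)^n}$ vanishes on constants, the triangle inequality reduces the problem to proving the operator bound
\[
|\Pi_\Omega(\u-\r)|_{W^{s,p}(\Omega)^n}\;\le\; C\,(R/r)^{n+2-2s}\,|\u-\r|_{W^{s,p}(\Omega)^n}.
\]

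To obtain this estimate I would expand $\Pi_\Omega(\u-\r)=\sum_{i<j}c_{ij}\I_{ij}$ with $c_{ij}=|\I_{ij}|_{W^{s,2}(\Omega)^n}^{-2}\langle\u-\r,\I_{ij}\rangle$, apply the triangle inequality, and use the H\"older-type bound $|\langle\u-\r,\I_{ij}\rangle|\le|\u-\r|_{W^{s,p}(\Omega)^n}|\I_{ij}|_{W^{s,p'}(\Omega)^n}$ recorded in Section~4, obtaining
\[
|\Pi_\Omega(\u-\r)|_{W^{s,p}(\Omega)^n}\;\le\;|\u-\r|_{W^{s,p}(\Omega)^n}\sum_{i<j}\frac{|\I_{ij}|_{W^{s,p}(\Omega)^n}\,|\I_{ij}|_{W^{s,p'}(\Omega)^n}}{|\I_{ij}|_{W^{s,2}(\Omega)^n}^{2}}.
\]
Each ratio is controlled by sandwiching $\Omega$ between a smoothened cube of edge-length comparable to $r$ inscribed in $\Omega$ (giving a lower bound for the denominator via monotonicity of the seminorm in the domain) and a smoothened cube of edge-length comparable to $R$ containing $\Omega$ (giving upper bounds for the two factors in the numerator). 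Lemma~\ref{lemma:seminormIijcubes} then supplies the scaling $|\I_{ij}|_{W^{s,q}}\approx \ell^{n/q+1-s}$ at $q=p,\,p',\,2$, so that the numerator is bounded by a constant multiple of $R^{n/p+n/p'+2-2s}=R^{n+2-2s}$ and the denominator by a constant multiple of $r^{n+2-2s}$; the $n(n-1)/2$ terms of the sum are absorbed into $C$.

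The orthogonality reduction in the first step is essentially formal, so the substance of the argument is the exponent bookkeeping at the end, where the identity $n/p+n/p'=n$ is critical in producing precisely the exponent $n+2-2s$. The only technical subtlety is justifying the existence of smoothened cubes inscribed in and circumscribing $\Omega$ with edge-lengths comparable to its interior and exterior diameters, which is immediate since any ball of diameter $\rho$ contains and is contained in smoothened cubes of edge-length comparable to $\rho$.
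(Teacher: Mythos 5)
Your proof is correct and follows essentially the same route as the paper: decompose by the triangle inequality (the paper writes $\u-\Pi_\Omega\u=(\u-\r)+(\r-\Pi_\Omega\u)$, you write $(\u-\r)-\Pi_\Omega(\u-\r)+b$, which is the same since $\r-\Pi_\Omega\u=-\Pi_\Omega(\u-\r)+b$), expand the projection in the $\langle\cdot,\cdot\rangle$-orthogonal basis $\{\I_{ij}\}$, bound the coefficients via the H\"older estimate $|\langle\u-\r,\I_{ij}\rangle|\le|\u-\r|_{W^{s,p}}|\I_{ij}|_{W^{s,p'}}$, and control the ratio $|\I_{ij}|_{W^{s,p}}|\I_{ij}|_{W^{s,p'}}/|\I_{ij}|_{W^{s,2}}^2$ by scaling and domain monotonicity, using $n/p+n/p'=n$ to land on the exponent $n+2-2s$. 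The only cosmetic difference is that the paper sandwiches $\Omega$ between balls $B(w,r)\subset\Omega\subset B(z,R)$ and invokes finite-dimensional norm equivalence on $B(0,1)$, whereas you sandwich between smoothened cubes and cite Lemma~\ref{lemma:seminormIijcubes} directly; both yield the identical bookkeeping.
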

      \begin{proof}
        By the triangle inequality we have
        \[|\u-\Pi_\Omega\u|_{W^{s,p}(\Omega)^n} \le |\u-\r|_{W^{s,p}(\Omega)^n} + |\r-\Pi_\Omega\u|_{W^{s,p}(\Omega)^n} = I + II.\]
        We only need to estimate $II$. Thus, let us write $\r(x)=Bx + b$, for some skew-symmetric matrix $B\in \R^{n\times n}$  and $b\in\R^n$. Then, $B=\sum_{1\le i< j\le n} b_{ij}I_{ij}$ and $\Pi_\Omega\u = \sum_{1\le i<j\le n}u_{ij}I_{ij}$. Therefore, we have
        \begin{align*}
          II^p &= \int_\Omega\int_\Omega\frac{|B(x-y) -\Pi_\Omega\u (x-y)|^p}{|x-y|^{n+ps}}\dy\dx \\
          &\le C \sum_{1\le i <j \le n} \int_\Omega\int_\Omega\frac{|b_{ij}-u_{ij}|^p|I_{ij} (x-y)|^p}{|x-y|^{n+ps}}\dy\dx.
        \end{align*}
        Now, we have that
        \[b_{ij} = |\I_{ij}|_{W^{s,2}(\Omega)^n}^{-2} \langle \r,\I_{ij}\rangle,\quad\quad u_{ij} = |\I_{ij}|_{W^{s,2}(\Omega)^n}^{-2} \langle \u,\I_{ij}\rangle,\]
        which along with the H\"older inequality gives
        \[|b_{ij}-u_{ij}| = |\I_{ij}|_{W^{s,2}(\Omega)^n}^{-2} \langle \r-\u,\mathcal{\I}_{ij}\rangle \le |\I_{ij}|_{W^{s,2}(\Omega)^n}^{-2}|\r-\u|_{W^{s,p}(\Omega)^n}|\mathcal{\I}_{ij}|_{W^{s,p'}(\Omega)^n}.\]
        Inserting this in the previous estimate we obtain
        \begin{align*}
          II^p &\le C \sum_{1\le i<j\le n}|\I_{ij}|_{W^{s,2}(\Omega)^n}^{-2p}|\I_{ij}|_{W^{s,p'}(\Omega)^n}^p|\r-\u|_{W^{s,p}(\Omega)^n}^p\int_\Omega\int_\Omega\frac{|I_{ij} (x-y)|^p}{|x-y|^{n+ps}}\dy\dx \\
          &= C \sum_{1\le i<j\le n}|\I_{ij}|_{W^{s,2}(\Omega)^n}^{-2p}|\I_{ij}|_{W^{s,p'}(\Omega)^n}^p|\I_{ij}|_{W^{s,p}(\Omega)^n}^p|\r-\u|_{W^{s,p}(\Omega)^n}^p.
        \end{align*}
        In order to estimate the semi-norms of $\I_{ij}$ we change the domain of integration and use equivalence of norms in the (finite dimensional) space of skew-symmetric matrices in $\R^{n\times n}$. For example, consider a ball $B(z,R)$ with center $z$ and radius $R$ such that $\Omega\subset B(z,R)$. Then:
        \begin{align*}
          |\I_{ij}|_{W^{s,p}(\Omega)^n}^p &\le |\I_{ij}|_{W^{s,p}(B(z,R))^n}^p = \int_{B_R}\int_{B_R} \frac{|I_{ij}(x-y)|^p}{|x-y|^{n+ps}} \dy\dx.
        \end{align*}
        Changing variables to $\tilde{x} = R^{-1}(x-z)$, $\tilde{y}=R^{-1}(y-z)$
        \begin{align*}
          |\I_{ij}|_{W^{s,p}(\Omega)^n}^p &\le \int_{B(0,1)}\int_{B(0,1)}\frac{R^p |I_{ij}(\tilde{x}-\tilde{y})|^p}{R^{n+ps}|\tilde{x}-\tilde{y}|^{n+ps}}R^{2n}{\rm d}\tilde{x}{\rm d}\tilde{y} \\
          &= R^{n+p-ps} \int_{B(0,1)}\int_{B(0,1)}\frac{|I_{ij}(\tilde{x}-\tilde{y})|^p}{|\tilde{x}-\tilde{y}|^{n+ps}}{\rm d}\tilde{x}{\rm d}\tilde{y} \\
          &= R^{n+p-ps} |\I_{ij}|^p_{W^{s,p}(B(0,1))^n}\\
          &\le C R^{n+p-ps} |\I_{ij}|^p_{W^{s,2}(B(0,1))^n},
        \end{align*}
        where in the last step we applied the equivalence of norms in the space of skew-symmetric matrices. Similarly, we obtain that $|\I_{ij}|_{W^{s,p'}(\Omega)^n}^p\le C\left(R^{\frac{n}{p'}+1-s}|\I_{ij}|_{W^{s,2}(B(0,1))^n}\right)^p$.

        On the other hand we can take $B(w,r)$ to be a ball contained in $\Omega$ with center $w$ and radius $r$. Then, $|\I_{ij}|_{W^{s,2}(\Omega)^n} \ge |\I_{ij}|_{W^{s,2}(B(w,r))^n}$ and applying the same change of variables we have
        \[|\I_{ij}|_{W^{s,2}(B(w,r))^n}^2 = r^{n+2-2s}|\I_{ij}|^2_{W^{s,2}(B(0,1))^n},\]
        and, consequently,
        \[|\I_{ij}|_{W^{s,2}(\Omega)^n}^{-2p} \le r^{-np-2p+2ps}|\I_{ij}|^2_{W^{s,2}(B(0,1))^n}.\]

        All these estimates together give
        \begin{align*}|\I_{ij}|_{W^{s,2}(\Omega)^n}^{-2p}|\I_{ij}|_{W^{s,p'}(\Omega)^n}^p|\I_{ij}|_{W^{s,p}(\Omega)^n}^p &\le C R^{n+n\frac{p}{p'}+2p-2ps}r^{-np-2p+2ps}|\I_{ij}|^2_{W^{s,2}(B(0,1))^n}\\
          &= C\left(\tfrac{R}{r}\right)^{(n+2-2s)p}|\I_{ij}|^2_{W^{s,2}(B(0,1))^n}.
        \end{align*}

        Inserting this in the estimate for $II^p$ and taking into account that $\sum_{1\le i<j\le n}|\I_{ij}|^2_{W^{2,s}(B(0,1))^n}$ is a fixed constant, the result follows.
      \end{proof}

      \section{Weighted truncated fractional Korn inequalities on John domains}\label{section:korn_john}

      In this section, we study certain versions of the Korn inequality on truncated fractional spaces and John domains. First, let us briefly recall that this class contains domains for which $W^{1,p}(\O)^n \not\subset W^{s,p}(\O)^n$. Take, for example,  $\O=[-1,1]^2\setminus\{(x,0): x\in[0,1]\}$ and let $\u\in W^{1,\infty}(\O)^2$ be a vector field such that $\u\equiv (1,1)$ in $[\frac14,\frac12]\times (0,\frac12]$ and $\u\equiv (0,0)$ in $[\frac14,\frac12]\times [-\frac12,0)$. Then, clearly $\u\in W^{1,p}(\O)^2$ for every $1\le p<\infty$, whereas $\u\not\in W^{s,p}(\O)^2$ whenever $sp>1$, because functions in this space cannot have jumps. 

       One way to overcome this drawback is to introduce the truncated Gagliardo seminorm 
      \begin{equation}
        \label{eq:rest_gagliardo_semi}
        |\u|_{W^{s,p}_{\tau}(\Omega)^n}
        := \left(
          \int_\Omega \int_{B(x,\tau \delta(x))}
          \frac{|\u(y) - \u(x)|^p}{|y - x|^{n+sp}} \dy\dx
        \right)^{1/p}
      \end{equation}\
      where $\tau\in (0,1)$ and $\delta(x)=\dist(x,\partial\Omega)$.
      Indeed, one always has, for every $\tau\in (0,1)$ and $1<p<\infty$, that $|\u|_{W^{s,p}_\tau(\O)^n}\le C|\u|_{W^{1,p}(\O)^n}$  \cite[Lemma 2.2]{DD2022}.  A deeper result states that, in domains like the one in the above example, the induced truncated space $W^{s,p}_\tau(\O)^n$ is actually a real interpolation space between $L^p(\O)^n$ and $W^{1,p}(\O)^n$ \cite{DD2019}.

      In analogy with \eqref{eq:rest_gagliardo_semi} we also introduce the truncated seminorm given by 
      \begin{equation}
        \label{eq:rest_meng_semi}
        |\u|_{X^{s,p}_{\tau}(\Omega)^n}
        := \left(
        \int_\Omega \int_{B(x,\tau \delta(x))} \frac{|(\u(y) - \u(x)) \cdot (y - x)|^p}{|y - x|^{n+sp+p}} \dx\dy \right)^{1/p},
      \end{equation}\
      where $\tau\in (0,1)$ and $\delta(x)=\dist(x,\partial\Omega)$, and consider fractional Korn inqualities involving these seminorms.

      Given a bounded John domain $\Omega\subset\R^n$ and a Whitney decomposition $\{Q_t\}_{t\in\Gamma}$ as described in Section \ref{section:domains_trees}, we assume $\Gamma$ is chosen as in Lemma \ref{lemma:treeJohn}. Let us denote $\ell_t$ the edge-length of $Q_t$ and $q_t$ its center, so that $Q_t = \tfrac{\ell_t}{2}\hat{Q} + q_t$. We consider two expansions of $Q_t$. First, we define
      \begin{align*}
        \hat{Q}^{*/2}:=& \{x\in\R^n:\,\|x\|_\infty \leq 1+\tfrac{1}{4}\}\\
        \hat{Q}^*:    =& \{x\in\R^n:\,\|x\|_\infty \leq 1+\tfrac{1}{2}\},
      \end{align*}
      which in turn allows us to introduce
      \begin{align*}
        Q_t^{*/2}:=& \tfrac{\ell_t}{2}\hat{Q}^{*/2}+q_t,\\
        Q_t^*:    =& \tfrac{\ell_t}{2}\hat{Q}^{*}+q_t.
      \end{align*}
      These cubes have the same center as $Q_t$, but an edge-length expanded by a factor of $\tfrac{1}{4}$ and $\tfrac{1}{2}$ respectively.

      Now, we can fix a value of $N$ for the definition of the reference smoothened cube $\hat{U}$ given in \eqref{eq:general_smoothen}. In particular, we choose $N$ large enough so that:
      \[\hat{Q}^{*/2}\subset (1+\tfrac{1}{2})\hat{U} \subset \hat{Q}^*.\]
    Actually, any $N\geq \tfrac{\ln(n)}{\ln(6)-\ln(5)}$ works, where the comparison between $\hat{U}$ and the expanded cubes of $\hat{Q}$ follows from the relation between $\|\cdot\|_N$ and $\|\cdot\|_\infty$:
    \[
      \|x\|_\infty \le \|x\|_N \le n^{1/N} \|x\|_\infty.
    \]

    We will work with the following expanded and smoothened cubes based on $Q_t$:
    \begin{equation}\label{def:smoothen_cubes}
      U_t := (1+\tfrac{1}{2})\tfrac{\ell_t}{2}\hat{U}+q_t.
    \end{equation}
    Notice that this implies
    \[Q^{*/2}_t \subset U_t \subset Q^*_t.\]

    In the sequel we will use extensively that, since they are small expansions of Whitney cubes, the cubes $Q_t^*$ have finite overlapping. In particular, $\sum_{t\in\Gamma}\chi_{Q_t^*}(x)\leq C$, for some constant $C$ depending only on the dimension $n$.   %

    The following will be our local result:
    \begin{lemma}\label{lemma:Menguesha_smoothen_cubes}
      There exists a constant $C\geq 1$ such that
      \begin{equation*}
        \inf_{\r \in RM} |\u-\r|_{W^{s,p}(U_t)^n} \le C |\u|_{X^{s,p}(U_t)^n},
      \end{equation*}
      for any vector field $\u\in W^{s,p}(U_t)^n$ and any smoothened cube in the collection defined in \eqref{def:smoothen_cubes}.
    \end{lemma}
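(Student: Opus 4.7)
The plan is to derive the inequality from the fractional Korn inequality on $\mathcal{C}^1$ domains already available in \cite[Theorem 1.3]{Harutyunyan2023a}, combined with a scaling argument. This is in fact the very reason why the smoothened cubes were introduced in the preceding section: the reference domain $\hat{U}$ is designed so that it is smooth enough to lie in the scope of that theorem, while at the same time being comparable from both sides to the standard Whitney cube $\hat{Q}$.

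The first step is to verify that $\hat{U}=\{x\in\R^n:\|x\|_N\le 1\}$ is a bounded convex $\mathcal{C}^1$ (indeed $\mathcal{C}^\infty$ if $N$ is chosen to be an even integer) domain, so that \cite[Theorem 1.3]{Harutyunyan2023a} applies and yields a constant $\hat{C}=\hat{C}(n,p,s,N)$ such that
\[\inf_{\hat{\r}\in RM}|\hat{\u}-\hat{\r}|_{W^{s,p}(\hat{U})^n}\le \hat{C}\,|\hat{\u}|_{X^{s,p}(\hat{U})^n}\]
for every $\hat{\u}\in W^{s,p}(\hat{U})^n$.

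The second step is to transport this inequality to $U_t=\lambda_t\hat{U}+q_t$, with $\lambda_t=\tfrac{3\ell_t}{4}$. Given $\u\in W^{s,p}(U_t)^n$, I set $\hat{\u}(\hat{x}):=\u(\lambda_t\hat{x}+q_t)$ and perform the change of variables $x=\lambda_t\hat{x}+q_t$, $y=\lambda_t\hat{y}+q_t$ in both seminorms. A direct computation, analogous to the one already carried out in Lemma \ref{lemma:seminormIijcubes}, shows
\[|\u|_{W^{s,p}(U_t)^n}^p = \lambda_t^{n-sp}\,|\hat{\u}|_{W^{s,p}(\hat{U})^n}^p \quad\text{and}\quad |\u|_{X^{s,p}(U_t)^n}^p = \lambda_t^{n-sp}\,|\hat{\u}|_{X^{s,p}(\hat{U})^n}^p,\]
so that both seminorms rescale by the same factor. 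Moreover, the map $\r\mapsto\hat{\r}$ defined by $\hat{\r}(\hat{x}):=\r(\lambda_t\hat{x}+q_t)$ is a bijection of $RM$ onto itself, since skew-symmetric matrices are preserved under scalar dilation and translations are free. The two infima are therefore related by the same factor $\lambda_t^{(n-sp)/p}$, and the inequality on $\hat{U}$ transfers to $U_t$ with the same constant $\hat{C}$.

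In this strategy there is no serious obstacle: the smoothing of the cube has been built into the very definition of $\hat{U}$ precisely to make the hypotheses of \cite[Theorem 1.3]{Harutyunyan2023a} immediately applicable, and the scaling argument is essentially forced by the scale-invariance of both seminorms. The only point requiring attention is to track that $\hat{C}$ depends only on the fixed reference domain $\hat{U}$ (and on $n,p,s$), and hence is uniform over all $t\in\Gamma$; this is ensured because all $U_t$ arise from the single reference $\hat{U}$ by composition with a translation and a dilation.
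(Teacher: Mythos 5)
Your proof is correct and follows essentially the same route as the paper's: apply the fractional Korn inequality of Harutyunyan et al.\ (Theorem 1.3) on the fixed reference smoothened cube $\hat{U}$, and transfer it to each $U_t$ via the affine change of variables, using that both seminorms scale by the identical factor $\lambda_t^{n-sp}$ and that the affine pullback is a bijection of $RM$. The only (helpful) addition is your explicit observation that $\hat{U}$ is a $\mathcal{C}^1$ (indeed $\mathcal{C}^\infty$ for even $N$) domain, a point the paper leaves implicit.
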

    \begin{proof}
      In \cite[Theorem 1.3]{Harutyunyan2023a}, the authors established the validity of the fractional Korn inequality for domains with smooth boundaries. Hence, the inequality holds for each smoothened cube \( U_t \). In this lemma, we show that the constant valid for \( \hat{U} \) also ensures the validity of the inequality for the remaining smoothened cubes \( U_t \) via a change of variables.

      Let us consider \( U = F(\hat{U}) \), \(x = F(\hx) \), and \( y = F(\hy) \), where \( F(\hx) = \tfrac{\ell}{2} \hx + q \) where $\ell$ is the edge-length of $U$ and $q$ its center. Thus, given \( \v \in W^{s,p}(U)^n \)     %
      \begin{align*}
        |\v|^p_{W^{s,p}(U)^n} &=  \int_U \int_U \frac{|\v(x) - \v(y)|^p}{|x - y|^{n+sp}} \,\dy\dx\\
        &=  \int_{\hat{U}} \int_{\hat{U}} \frac{|{\v}(F(\hx)) - {\v}(F(\hy))|^p}{|F(\hx) - F(\hy)|^{n+sp}} \left(\tfrac{\ell}{2}\right)^{2n} \,\dhy\,\dhx\\
        &= \left(\tfrac{\ell}{2}\right)^{n-sp}  \int_{\hat{U}} \int_{\hat{U}} \frac{|{\v}(F(\hx)) - {\v}(F(\hy)|^p}{|\hx - \hy|^{n+sp}} \,\dhy\,\dhx
          =  \left(\tfrac{\ell}{2}\right)^{n-sp}\, |{\v}\circ F|^p_{W^{s,p}(\hat{U})^n}.\\
          \intertext{On the other hand, taking into account that $y-x = \tfrac{\ell}{2}(\hy-\hx)$ and $|y-x| = \left(\tfrac{\ell}{2}\right)^n|\hy-\hx|$ and applying the same change of variables we obtain}
          |{\v}|^p_{X^{s,p}(U)^n} &=  \int_U \int_U \frac{|(\v(y) - \v(x)) \cdot (y - x)|^p}{|y - x|^{n+sp+p}} \dy\dx\\
          &=  \int_{\hat{U}} \int_{\hat{U}} \frac{\left(\tfrac{\ell}{2}\right)^p|(\v(F(\hy)) - \v(F(\hx))) \cdot (\hy - \hx)|^p}{\left(\tfrac{\ell}{2}\right)^{n+sp+p}|\hy - \hx|^{n+sp+p}} \,\dhy\,\dhx\\
          &= \left(\tfrac{\ell}{2}\right)^{n-sp}\, |{\v}\circ F|^p_{X^{s,p}(\hat{U})^n}.
        \end{align*}
        Next, using the fact that \( \r \mapsto \r \circ F \) is a bijection on the space of infinitesimal rigid motions \( RM \), together with the validity of the fractional Korn inequality on \( \hat{U} \) with constant \( C_0 \), and the identities established above, we can conclude that
        \begin{align*}
          \inf_{\r\in RM} |{\v}-\r|_{W^{s,p}(U)^n} &= \left(\tfrac{\ell}{2}\right)^{n/p-s}\, \inf_{\r\in RM} |{\v}\circ F-\r|_{W^{s,p}(\hat{U})^n}\\
          &\leq C_0\,\left(\tfrac{\ell}{2}\right)^{n/p-s}\, |{\v}\circ F|_{X^{s,p}(\hat{U})^n}= C_0\, |{\v}|_{X^{s,p}(U)^n}.
        \end{align*}
        Hence, the proof is complete.
      \end{proof}

      The following theorem is a \emph{truncated} fractional Korn inequality for John domains. By \emph{truncated} we mean that both norms are truncated as in \eqref{eq:rest_gagliardo_semi} and \eqref{eq:rest_meng_semi}. In the proof we need to apply the discrete Poincaré inequality \eqref{eq:discretePoincare}. The validity of such inequality is proven later in Lemma \ref{lemma:discretePoincareholds}, under a very general setting. Since it is a rather technical result, we difer its proof.

      \begin{theorem}\label{thm:rest_fractional_Korn_John}
        Let $\Omega\subset\R^n$ be a bounded John domain, $1<p<\infty$, and $0<s<1$. Take $\tau_1<\frac{1}{36\sqrt{n}}$ and $\frac{3}{5}\le \tau_2 < 1$. Then, there exists a constant $C$ such that
        \begin{equation}\label{eq:restricted_fractional_Korn}
          \inf_{\r \in RM}|\u-\r|_{W^{s,p}_{\tau_1}(\Omega)^n} \le C |\u|_{\chi^{s,p}_{\tau_2}(\Omega)^n},
        \end{equation}
        for any vector field $\u\in W^{s,p}_{\tau_2}(\Omega)^n $.
      \end{theorem}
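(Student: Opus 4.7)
The plan is a local-to-global argument based on the Whitney decomposition $\{Q_t\}_{t\in\Gamma}$ of $\Omega$ with the tree structure provided by Lemma \ref{lemma:treeJohn}. Writing $\ell_t$ for the edge length of $Q_t$ and $U_t$ for the smoothened cube in \eqref{def:smoothen_cubes}, the first step is to apply Lemma \ref{lemma:Menguesha_smoothen_cubes} on each $U_t$ to obtain a rigid motion $\r_t\in RM$ with
\[|\u-\r_t|_{W^{s,p}(U_t)^n}\le C\,|\u|_{X^{s,p}(U_t)^n}.\]
Choosing as global minimizer the rigid motion $\r:=\r_{t_0}$ associated with the root, I would then split the left-hand side along the Whitney covering. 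Two geometric observations drive the argument. First, for $x\in Q_t$, Whitney regularity gives $\delta(x)\le 9\sqrt{n}\,\ell_t$, so the hypothesis $\tau_1<1/(36\sqrt{n})$ forces $B(x,\tau_1\delta(x))\subset U_t$. Second, using $d(Q_t,\partial\Omega)\ge 3\diam(Q_t)$, the hypothesis $\tau_2\ge 3/5$ implies $U_t\times U_t\subset\{(x,y)\in\Omega^2:|y-x|<\tau_2\delta(x)\}$. After inserting $\r_t$ via the triangle inequality, the left-hand side is bounded by $S_1+S_2$, where
\[S_1:=C\sum_{t\in\Gamma}|\u-\r_t|_{W^{s,p}(U_t)^n}^p,\qquad S_2:=C\sum_{t\in\Gamma}|\r_t-\r_{t_0}|_{W^{s,p}(U_t)^n}^p.\]

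The sum $S_1$ is controlled directly by Lemma \ref{lemma:Menguesha_smoothen_cubes}, the second geometric observation, and the bounded overlap of the dilated cubes, yielding $S_1\le C|\u|_{X^{s,p}_{\tau_2}(\Omega)^n}^p$. The core of the argument is the treatment of $S_2$. I would telescope $\r_t-\r_{t_0}=\sum_{t_0\prec u\preceq t}(\r_u-\r_{u_p})$ along the tree. Each difference is itself a rigid motion, and Lemma \ref{lemma:seminormIijcubes} shows that the $W^{s,p}$ seminorm of a rigid motion on a smoothened cube of edge length $\ell$ scales like $\ell^{n/p+1-s}$. This allows me to trade the seminorm on $U_t$ for the seminorm on the small overlap $U_u\cap U_{u_p}$ (of size comparable to $\ell_u$ thanks to the shared face guaranteed by Lemma \ref{lemma:treeJohn}) at the price of an explicit factor $(\ell_t/\ell_u)^{n/p+1-s}$. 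There, adding and subtracting $\u$ and applying Lemma \ref{lemma:Menguesha_smoothen_cubes} to both $\u-\r_u$ and $\u-\r_{u_p}$ gives
\[|\r_u-\r_{u_p}|_{W^{s,p}(U_u\cap U_{u_p})^n}\le C\,|\u|_{X^{s,p}(U_u\cup U_{u_p})^n}.\]
Assembling these estimates along the telescoping sum puts $S_2$ exactly in the form of the right-hand side of the discrete weighted Poincaré inequality \eqref{eq:discretePoincare}. Invoking Lemma \ref{lemma:Poincare} with weights ${\bm\nu}$, ${\bm\mu}$ calibrated to the geometric scaling, together with the bounded overlap of the $\{U_t\}$, would produce $S_2\le C|\u|_{X^{s,p}_{\tau_2}(\Omega)^n}^p$, and combining the two bounds yields the claim.

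The main obstacle is the bookkeeping of the scaling exponents: I must choose weights ${\bm\nu}$ and ${\bm\mu}$ that, on one hand, reproduce the geometric factor $(\ell_t/\ell_u)^{n/p+1-s}$ arising from the rigid-motion comparison, and, on the other hand, satisfy the Hardy-type hypothesis \eqref{eq:discreteHardy} (verifiable through \eqref{eq:suffcond} of Theorem \ref{thm:suff cond}) on the Boman-type tree provided by Lemma \ref{lemma:treeJohn}. The existence of such weights and the verification of \eqref{eq:suffcond} for them is the content of the announced Lemma \ref{lemma:discretePoincareholds}, which I would invoke as a black box to close the argument.
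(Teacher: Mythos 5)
Your proposal follows essentially the same local-to-global strategy as the paper's proof: decompose $\Omega$ with the Boman-type tree of Lemma \ref{lemma:treeJohn}, apply the local inequality of Lemma \ref{lemma:Menguesha_smoothen_cubes} on the smoothened cubes $U_t$, compare local rigid motions on overlaps using the scaling from Lemma \ref{lemma:seminormIijcubes}, and close the sum over the tree via the discrete Hardy/Poincar\'e inequality validated in Lemma \ref{lemma:discretePoincareholds} with weights $\ell_t^{n+p-ps}$. The only notable implementation difference is that you take $\r_t$ to be the near-minimizer furnished by Lemma \ref{lemma:Menguesha_smoothen_cubes} and compare to the root rigid motion $\r_{t_0}$, while the paper uses the projection $A_t=\Pi_{U_t}\u$ and a weighted-mean matrix $A$, enabling a zero-mean normalization so the Poincar\'e inequality \eqref{eq:discretePoincare} can be invoked verbatim; after telescoping, your $S_2$ is actually in the form of the left-hand side of the Hardy inequality \eqref{eq:discreteHardy} (not the right-hand side of Poincar\'e as you write), but since Lemma \ref{lemma:discretePoincareholds} is proved precisely by verifying \eqref{eq:suffcond}, the Hardy bound is available and the argument goes through unchanged.
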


      \begin{proof}

        Let $\r(x) = b + Ax \in RM$. Since we are studying only the seminorm, the constant $b$ is immaterial. We will choose a particular skew-symmetric matrix $A$ below.

        \begin{align*}
          |\u-\r|_{W^{s,p}_{\tau_1}(\Omega)^n}^p &= \int_\Omega\int_{|y-x|<\tau_1 \delta(x)} \frac{|\u(x)-\u(y) - A(x-y)|^p}{|x-y|^{n+ps}}\dy\dx\\
          &=  \sum_{t\in V}\int_{Q_t}\int_{|y-x|<\tau_1 \delta(x)} \frac{|\u(x)-\u(y) - A(x-y)|^p}{|x-y|^{n+ps}}\dy\dx.
        \end{align*}

        Now, observe that $x\in Q_t$, so $\delta(x) \le d(Q_t,\partial\Omega)+\diam(Q_t)\le 9\diam(Q_t)$. Hence
        \[|y-x| < \tau_1 \delta(x) \le \frac{9\diam(Q_t)}{36\sqrt{n}} = \frac{\ell_t}{4},\]
        which implies that $y \in Q_t^{*/2} \subset U_t$, so we continue by extending the domains of integration:
        \begin{align*}
          |\u-\r|_{W^{s,p}_{\tau_1}(\Omega)^n}^p &\le \sum_{t\in V}\int_{Q_t}\int_{U_t} \frac{|\u(x)-\u(y) - A(x-y)|^p}{|x-y|^{n+ps}}\dy\dx\\
          &\le \sum_{t\in V}\int_{U_t}\int_{U_t} \frac{|\u(x)-\u(y) - A(x-y)|^p}{|x-y|^{n+ps}}\dy\dx.
          \intertext{In order to estimate this, consider $A_t = \Pi_{U_t}\u$, the local projection of the $s$ gradient of $\u$ onto the skew-symmetric matrices. Then}
          &\lesssim \sum_{t\in V}\int_{U_t}\int_{U_t}\frac{|\u(x)-\u(y) - A_t(x-y)|^p}{|x-y|^{n+ps}}\dy\dx\\ &\quad + \sum_{t\in V}\int_{U_t}\int_{U_t}\frac{|(A_t-A)(x-y)|^p}{|x-y|^{n+ps}}\dy\dx\\
          &= I + II
        \end{align*}

        $I$ can be estimated by Lemma \ref{lemma:Menguesha_smoothen_cubes}
        \begin{equation}\label{eq:estimateI}
          I \le C \sum_{t\in V} \int_{U_t}\int_{U_t} \frac{ |(\u(x)-\u(y))\cdot (x-y)|^p}{|x-y|^{n+ps+p}} \dy\dx.
        \end{equation}

        Let us now consider $II$. If we denote
        \[A_t = \sum_{1\le i<j \le n}a_{ij}^t I_{ij}\quad \textrm{and}\quad A=\sum_{1\le i<j \le n}a_{ij} I_{ij},\]
        we have that
        \[A_t-A = \sum_{1\le i< j\le n}(a_{ij}^t-a_{ij})I_{ij}.\]
        This gives:

        \begin{equation}\label{eq:Iijp}
          II \le C \sum_{t\in V}\sum_{1\le i<j\le n} |a_{ij}^t - a_{ij}|^p \int_{U_t}\int_{U_t} \frac{|I_{ij}(x-y)|^p}{|x-y|^{n+ps}}\dy\dx
        \end{equation}

        Applying Corollary \ref{corol:Iijcubes}, we obtain
        \[II \le C \sum_{t\in V}\sum_{1\le i<j \le n}\ell_t^{n+p-ps}|a_{ij}^t-a_{ij}|^p.\]

        Now we can properly choose the matrix $A$. Indeed, we will take $A$ such that

        \begin{equation}\label{eq:zeromean}
          \sum_{t\in V} \ell_t^{n+p-ps} (a_{ij}^t-a_{ij}) = 0 \quad \forall 1\le i<j\le n.
        \end{equation}
        Note that this implies
        \[ \sum_{t\in V} \ell_t^{n+p-ps} (A_t-A) = 0,\]
        or, in other words
        \[A = \frac{1}{\sum_{t\in V} \ell_t^{n+p-ps}}\sum_{t\in V} \ell_t^{n+p-ps} A_t.\]

        We need to check that this matrix is well defined, i.e.: that the summation is finite. First observe that
        \[\sum_{t\in V}\ell_t^{n+p-ps} \le \sum_{t\in V}\ell_t^n = \sum_{t\in V}|Q_t| = |\Omega|.\]
        On the other hand, for the summation of the matrices $A_t$ it is enough to consider each coefficient, namely:
        \[\sum_{t\in V}\ell_t^{n+p-ps}a_{ij}^t.\]
        Now, thanks to the definition of $A_t$, the H\"older inequality and Lemma \ref{lemma:seminormIijcubes} we have that
        \begin{align*}
          a_{ij}^t &= |\I_{ij}|_{W^{s,2}(U_t)^n}^{-2} \langle \u,\I_{ij}\rangle_{U_t\times U_t} \le |\I_{ij}|_{W^{s,2}(U_t)^n}^{-2} |\u|_{W^{s,p}(U_t)^n}|\I_{ij}|_{W^{s,p'}(U_t)^n} \\
          &= C \ell_t^{-n-2+2s}\ell_t^{\frac{n}{p'}+1-s} |\u|_{W^{s,p}(U_t)^n} = C\ell_t^{-\frac{n}{p}-1+s}|\u|_{W^{s,p}(U_t)^n},
        \end{align*}
        Using this and the H\"older inequality we have
        \begin{align*}
          \sum_{t\in V}\ell_t^{n+p-ps}a_{ij}^t &= C\sum_{t\in V}\ell_t^{\frac{n}{p'}+p-1-(p-1)s} |\u|_{W^{s,p}(U_t)^n} \\
          &\le C \left(\sum_{t\in V} \ell_t^{n+p-ps}\right)^\frac{1}{p'}\left(\sum_{t\in V}|\u|_{W^{s,p}(U_t)^n}^p\right)^\frac{1}{p},
          \intertext{and by the finite overlapping of the smoothened cubes $U_t$ we conclude}
          &\le C|\Omega|^\frac{1}{p'}|\u|_{W^{s,p}(\Omega)^n}.
        \end{align*}

        Now we consider the sequence ${\bf b} = \{b_t\}_{t\in V}$ with $b_t = a_{ij}^t-a_{ij}$. \eqref{eq:zeromean} establishes a zero-mean property for ${\bf b}$, which allows us to apply the discrete Poincaré inequality \eqref{eq:discretePoincare} with $\nu_t=\mu_t=\ell_t^{n+p-ps}$. For this, we need Lemma \ref{lemma:discretePoincareholds} (stated below). Notice that the exponent $\gamma$ in that lemma corresponds here to $1-s$, and condition \eqref{eq:condgamma} follows.

        \begin{align*}
          \sum_{t\in V}\ell_t^{n+p-ps} |a_{ij}^t-a_{ij}|^p &\le C \sum_{t\in V}\ell_t^{n+p-ps} |a_{ij}^t - a_{ij}^{t_p}|^p,
        \end{align*}
        where $t_p$ is the parent of $t$.
        We take $\tilde{U}_t$ a smoothened cube contained in $U_t\cap U_{t_p}$ with edge-length proportional to $\ell_t$. We denote $\tilde{A}_{t}=\Pi_{\tilde{U}_t\times \tilde{U}_t}\u$, with coefficients $\tilde{a}^t_{ij}$, and insert $\tilde{a}_{ij}^{t}$:

        \begin{align*}
          \quad\quad &\lesssim \sum_{t\in V}\ell_t^{n+p-ps} |a_{ij}^t - \tilde{a}_{ij}^{t}|^p + \sum_{t\in V}\ell_t^{n+p-ps} |\tilde{a}_{ij}^{t} - a_{ij}^{t_p}|^p.
        \end{align*}
        It is clear that both terms are analogous, so we only consider the first one. Using \eqref{eq:Iijp} we can recover the integral:

        \begin{align*}
          \sum_{t\in V} \ell_{t}^{n+p-ps}|a_{ij}^t-\tilde{a}_{ij}^{t}|^p &\approx \sum_{t\in V} \int_{\tilde{U}_t}\int_{\tilde{U}_t}|a_{ij}^t-\tilde{a}_{ij}^{t}|^p\frac{|I_{ij}(x-y)|^p}{|x-y|^{n+p-ps}}\dy\dx.
        \end{align*}
        And summing over $i,j$:
        \begin{align*}
          \sum_{1\le i<j\le n}\sum_{t\in V} \ell_{t}^{n+p-ps}|a_{ij}^t-\tilde{a}_{ij}^{t}|^p &\lesssim \sum_{t\in V} \int_{\tilde{U}_t}\int_{\tilde{U}_t}\frac{|(A_t-\tilde{A}_{t})(x-y)|^p}{|x-y|^{n+p-ps}}\dy\dx.
        \end{align*}

        Here we can insert $\u$, apply the triangular inequality and expand the domain of integration to obtain

        \begin{align*}
          \quad\quad &\lesssim \sum_{t\in V} \int_{\tilde{U}_t}\int_{\tilde{U}_t}\frac{|\u(x)-\u(y)-A_t(x-y)|^p}{|x-y|^{n+p-ps}}\dy\dx\\
          &\quad\quad + \sum_{t\in V} \int_{\tilde{U}_t}\int_{\tilde{U}_t}\frac{|\u(x)-\u(y)-\tilde{A}_t(x-y)|^p}{|x-y|^{n+p-ps}}\dy\dx.\\
          &\le \sum_{t\in V} \int_{U_{t}}\int_{U_{t}}\frac{|\u(x)-\u(y)-A_t(x-y)|^p}{|x-y|^{n+p-ps}}\dy\dx\\
          &\quad\quad + \sum_{t\in V} \int_{\tilde{U}_t}\int_{\tilde{U}_t}\frac{|\u(x)-\u(y)-\tilde{A}_t(x-y)|^p}{|x-y|^{n+p-ps}}\dy\dx.
        \end{align*}

        In these terms we can apply Lemma \ref{lemma:Menguesha_smoothen_cubes}, and the finite overlapping of $U_t$ and $\tilde{U_t}$, obtaining
        \[II\le C \sum_{t\in V} \int_{U_t}\int_{U_t} \frac{\left|(\u(x)-\u(y))\cdot (x-y)\right|^p}{|x-y|^{n+sp+p}}\dy\dx,\]
        which is the same estimate that we previously obtained for $I$ in \eqref{eq:estimateI}. In order to finish the proof, let us observe that, since $x\in U_t$,
        \begin{align*}
          \tau_2 \delta(x) &\ge \frac{3}{5} d(U_t,\partial\Omega) \ge \frac{3}{5}(d(Q^*_t,\partial\Omega)-\frac{1}{2}diam(Q_t))\\
          &\ge \frac{3}{5}(3\diam(Q_t)-\frac{1}{2}\diam(Q_t)) = \frac{3}{2}\diam(Q_t) \\
          &= \diam(Q^*_t),
        \end{align*}
        and consequently, for $x\in U_t$, we have that $U_t\subset Q_t^* \subset \{y:|y-x|<\tau_2 \delta(x)\}$, which gives
        \begin{align*}
          I+II &\le C \sum_{t\in V} \int_{U_t}\int_{U_t} \frac{\left|(\u(x)-\u(y))\cdot(x-y)\right|^p}{|x-y|^{n+sp+p}}\dy\dx \\
          &\le C \sum_{t\in V} \int_{U_t}\int_{|y-x|<\tau_2 \delta(x)} \\
          &\le C \int_\Omega \int_{|y-x|<\tau_2 \delta(x)} \frac{\left|(\u(x)-\u(y))\cdot(x-y)\right|^p}{|x-y|^{n+sp+p}}\dy\dx,
        \end{align*}
        which completes the proof.
      \end{proof}

      \begin{remark}
        The conditions $\tau_1<\frac{1}{36\sqrt{n}}$ and $\tau_2 \ge \frac{3}{5}$ are imposed in order to be able to pass from $\{y:\,|y-x|<\tau_1 d(x)\}$ to $U_t$ and then from $U_t$ to $\{y:\,|y-x|<\tau_2 d(x)\}$. Hence they can be slightly relaxed by adapting the Whitney partition accordingly.
      \end{remark}

      Let us now prove the validity of the discrete Poincaré inequality on trees based on John domains as in Lemma \ref{lemma:treeJohn}. We state the result for weights of the form $\ell_t^{n+p\gamma}$. In the previous theorem, $\gamma=1-s$. The following lemma imposes a condition on the exponent $\gamma$, in terms of the Assouad dimension of the boundary, so we begin by introducing this dimension.

      \begin{definition}
        Given a set $E\subset \R^n$ and $r>0$, we denote by $N_r(E)$ the least number of open balls of radius $r$ that are needed for covering $E$. The \emph{Assouad dimension} of $E$, denoted $\dim_A(E)$ is the infimal $\lambda\geq 0$ for which there is a positive constant $C$, such that for every $0<r<R$ the following estimate holds:
        \[\sup_{x\in E} N_r(B_R(x)\cap E) \le C\bigg(\frac{R}{r}\bigg)^\lambda.\]
      \end{definition}

      The Assouad dimension is the largest of the commonly used notions of dimension. As an example, consider the set in $\R$, $\mathcal{S} = \{0\}\cup\{\tfrac{1}{n}:\,n\in\mathbb{N}\}$. Since $\mathcal{S}$ is a numerable set, its Hausdorff dimension is zero. On the other extreme, $\dim_A(\mathcal{S})=1$, because the Assouad dimension captures the local behaviour at the accumulation point as if it were a line. In the middle, the box dimension of $\mathcal{S}$ is one half. We refer the reader to \cite{Fraser} for more details on the Assouad dimension. It is worth noting, however, that the Assoud dimension has been associated with distance weights. For example, in \cite{Dyda19}, a characterization of the exponents $\beta$ for which $d(\cdot,F)^\beta$ belongs to the Muckenhoupt class $A_p$ is given in terms of the Assouad codimension of the set $F$. Our results are based on estimates obtained in \cite{LGO-2025}, where several weighted inequalities are proven on John domains, with weights given by powers of the distance to the boundary of the domain satisfying a restriction similar to \eqref{eq:condbeta}.

      \begin{lemma}\label{lemma:discretePoincareholds} Let $\Omega \subset \R^n$ a bounded John domain, $\{Q_t\}_{t\in V}$ a Whitney decomposition and $\ell_t$ the edge-length of $Q_t$. Moreover take weights $\mu_t = \nu_t = \ell_t^{n+p\gamma}$, with $\gamma\in\R$ satisying
        \begin{equation}\label{eq:condgamma}
          \gamma > -(n-\dim_A(\partial\Omega))/p.
        \end{equation}
        Then the discrete weighted Poincaré inequality \eqref{eq:discretePoincare} holds with weights $\mu_t$ and $\nu_t$.
      \end{lemma}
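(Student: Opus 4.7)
The plan is to reduce \eqref{eq:discretePoincare} to the discrete Hardy inequality \eqref{eq:discreteHardy} via Lemma \ref{lemma:Poincare}, and then verify the latter by checking that the sufficient condition \eqref{eq:suffcond} of Theorem \ref{thm:suff cond} holds for the tree associated to a John domain under the hypothesis \eqref{eq:condgamma}. With $\mu_t=\nu_t=\ell_t^{n+p\gamma}$ the quantity $C_{tree}$ splits into two structural pieces: the \emph{chain sum} $\sum_{t_0\prec u\preceq t}\ell_u^{-(n+p\gamma)p'/p}$ running from the root $t_0$ down to $t$, and the \emph{shadow sum} $\sum_{u\succeq t}\ell_u^{n+p\gamma}$ (possibly with an additional chain factor inside). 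All estimates will exploit the key feature of the John tree guaranteed by Lemma \ref{lemma:treeJohn}: the shadow $W_t$ is contained in the expanded cube $KQ_t$.

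For the shadow sum I would bin the Whitney cubes by dyadic scales. Writing $W_t=\bigcup_{k\ge k_0}\mathcal{F}_k$, where $\mathcal{F}_k$ consists of the cubes in $W_t\subset KQ_t$ of edge-length $2^{-k}$ and $2^{-k_0}\sim \ell_t$, a standard Whitney-to-boundary correspondence together with the definition of the Assouad dimension yields $\#\mathcal{F}_k\lesssim (2^{k-k_0})^{\dim_A(\partial\Omega)}$. Substituting this,
\[
\sum_{u\succeq t}\ell_u^{n+p\gamma}\lesssim \sum_{k\ge k_0}2^{-k(n+p\gamma)}\,2^{(k-k_0)\dim_A(\partial\Omega)}\lesssim \ell_t^{n+p\gamma},
\]
precisely because \eqref{eq:condgamma} makes the geometric series summable (its reciprocal would blow up at the critical exponent $-(n-\dim_A(\partial\Omega))/p$). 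For the chain sum I would use that cubes along a John chain can only shrink in a geometrically controlled way, so the sum is dominated by its terminal term and $\sum_{t_0\prec u\preceq t}\ell_u^{-(n+p\gamma)p'/p}\lesssim \ell_t^{-(n+p\gamma)p'/p}$ (modulo a logarithmic factor that will be absorbed by the free parameter $\theta$ in Theorem \ref{thm:suff cond}).

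Combining the two estimates, choosing $\theta>1$ sufficiently close to $1$ (or, alternatively, sufficiently large, depending on the sign of $n+p\gamma$) to kill any borderline geometric factor, the inside expression of $C_{tree}$ becomes, for each fixed $t$, a product of the form $\ell_t^{-(n+p\gamma)/p}\cdot \ell_t^{(n+p\gamma)/p}=1$ up to constants independent of $t$. Hence $C_{tree}<\infty$, and Theorem \ref{thm:suff cond} delivers \eqref{eq:discreteHardy} for the sequence ${\bm b}=\{b_t\}$ with the prescribed weights. Applying Lemma \ref{lemma:Poincare} then yields exactly \eqref{eq:discretePoincare}, completing the proof.

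The main obstacle I expect is the shadow-sum estimate and its interaction with the chain factor that appears nested inside the outer sum of \eqref{eq:suffcond}. Handling that factor cleanly requires a careful choice of $\theta$ and a telescoping/Abel summation argument, together with the Whitney-to-boundary counting that invokes $\dim_A(\partial\Omega)$; this is where the sharpness of condition \eqref{eq:condgamma} becomes visible. Since analogous weighted bounds for John domains have been developed in \cite{LGO-2025}, I would import those estimates rather than re-derive them from scratch, keeping the proof focused on the verification of \eqref{eq:suffcond}.
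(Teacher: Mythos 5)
Your plan follows the same route as the paper: reduce to Lemma~\ref{lemma:Poincare}, verify the sufficient condition \eqref{eq:suffcond} from Theorem~\ref{thm:suff cond}, and estimate the chain and shadow sums by binning Whitney cubes dyadically, bounding the chain count uniformly in each scale (via the John property $Q_t\subseteq KQ_u$) and the shadow count by an Assouad-type bound imported from \cite{LGO-2025}, with the free parameter $\theta$ chosen close to $1$ to handle the nested chain factor. Two small precisions worth making: the shadow-counting bound holds with exponent $\lambda$ for every $\lambda>\dim_A(\partial\Omega)$ (not with $\dim_A(\partial\Omega)$ itself), after which you pick $\lambda$ close to $\dim_A(\partial\Omega)$ and $\theta$ close to $1$ so that $-\lambda+\tfrac{1}{\theta}(n+p\gamma)>0$; and there is no logarithmic factor in the chain sum to absorb --- condition \eqref{eq:condgamma} already forces $n\tfrac{p'}{p}+\gamma p'>0$, so that geometric sum is dominated outright by its terminal term.
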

      \begin{proof}
        Thanks to Theorem \ref{thm:suff cond} it is enough to verify \eqref{eq:suffcond} for a John domain and the desired weights. The condition involves three vertices $t$, $u$, and $v$. For clarity we consistently apply the notation $\ell_t = 2^{-k}$, $\ell_u=2^{-i}$ and $\ell_v=2^{-j}$, for $i,j,k \in \mathbb{N}$. Furthermore,  we assume that the largest Whitney cube in $\Omega$ has edge-length $1$. Let us denote
        \begin{equation}\label{eq:P(i)}
          \mathbb{P}_i(t) = \#\{u\in V:\, u\preceq t, \ell_u = 2^{-i}\},
        \end{equation}
        \begin{equation}\label{eq:W(i)}
          \mathbb{W}_i(t) = \#\{u\in V:\, u\succeq t, \ell_u = 2^{-i}\}.
        \end{equation}
        $\mathbb{P}_i(t)$ is the number of cubes of size $2^{-i}$ in the chain that connects the root of the tree, $t_0$ with $t$, whereas $\mathbb{W}_i(t)$ is the number of cubes of size $2^{-i}$ in the \emph{shadow} of $t$ (recall \eqref{eq:shadow}).

        The definition of John domain implies that $\mathbb{P}_i(t) \le K^n$, where $K$ is the constant given in Lemma \ref{lemma:treeJohn} which depends only on $\Omega$. On the other hand, we have that $\mathbb{W}_i(t)\le C \left(\tfrac{2^i}{2^k}\right)^\lambda$ for every $\lambda > \dim_A(\partial\Omega)$. This was proven in \cite[Lemma 4.5]{LGO-2025}.

        In order to estimate the summations for $u\preceq t$ and $u \succeq t$, it is important to notice that the cubes $Q_u$ such that $u\preceq t$ are tipically larger than $Q_t$, but can eventually be a little smaller. Similarly, almost all cubes $Q_u$ such that $u\succeq t$ are smaller than $Q_t$, but some of them can be slightly larger. In general, we can choose a fixed constant $M$ depending only on $\Omega$ such that $\ell_u \ge 2^{-k-M}$ for every $u\preceq t$ and $\ell_u \le 2^{-k+M}$ for every $u\succeq t$. Let us now consider the first factor in \eqref{eq:suffcond}. We have:
        \begin{align*}
          \sum_{t_0\prec u \preceq t}\mu_u^{-\frac{p'}{p}} &= \sum_{t_0\prec u \preceq t}\ell_u^{-n\frac{p'}{p}-\gamma p'} = \sum_{j=0}^{k+M} \mathbb{P}_j(t) (2^{-j})^{-n\frac{p'}{p}-\gamma p'} \\
          &\le C (2^{-(k+M)})^{-n\frac{p'}{p}-\gamma p'} \le C \ell_t^{-n\frac{p'}{p}-\gamma p'}.
        \end{align*}

        In the second factor we have the inner summation that can be estimated in the same way:
        \begin{align*}
          \sum_{t_0\prec v\preceq u} \mu_v^{-\frac{p'}{p}} \le C \ell_u^{-n\frac{p'}{p}-\gamma p'}.
        \end{align*}

        Inserting this in the second factor we obtain:
        \begin{align*}
          \sum_{u \succeq t}\nu_u \left(\sum_{t_0\prec v \preceq u} \mu_v^{-\frac{p'}{p}}\right)^{\frac{p}{p'}(1-\frac{1}{\theta})} &\le C \sum_{u\succeq t} \ell_u^{n+p\gamma} (\ell_u^{-n\frac{p'}{p}-\gamma p'})^{\frac{p}{p'}(1-\frac{1}{\theta})} = C \sum_{u\succeq t} \ell_u^{\frac{1}{\theta}(n+p\gamma)} \\
          &= C \sum_{i=k-M}^\infty \mathbb{W}_i(t) (2^{-i})^{\frac{1}{\theta}(n+p\gamma)}\\
          &\le C \sum_{i=k-M}^\infty \left(\frac{2^i}{2^k}\right)^\lambda (2^{-i})^{\frac{1}{\theta}(n+p\gamma)}\\
          &\le C 2^{-k\lambda} \sum_{i=k-M}^\infty (2^{-i})^{-\lambda+\frac{1}{\theta}(n+p\gamma)}.
          \intertext{Now, we use condition \eqref{eq:condgamma}, which implies that there is a $\lambda$ close enough to $\dim_A(\partial\Omega)$ and $\theta$ close enough to $1$ such that $-\lambda + \frac{1}{\theta}(n+p\gamma) >0$, then can write}
          &\le C 2^{-k\lambda} (2^{-(k-M)})^{-\lambda+\frac{1}{\theta}(n+p\gamma)} \\
          &= C 2^{-k\lambda + k\lambda -\frac{k}{\theta}(n+p\gamma)} = C \ell_t^{\frac{1}{\theta}(n+p\gamma)}.
        \end{align*}

        Joining the estimates for the two factors we have:

        \begin{align*}
          C_{tree} \le C \sup_{t\in \Gamma}(\ell_t^{-n\frac{p'}{p}-p'\gamma})^{\frac{1}{\theta p'}} (\ell_t^{\frac{1}{\theta}(n+p\gamma)})^{\frac{1}{p}}\le C,
        \end{align*}
        which concludes the proof.
      \end{proof}

      Finally, we can state a weighted analogue of Theorem \ref{thm:rest_fractional_Korn_John}. To this end we introduce the weighted seminorms

      \begin{equation}
        \label{eq:rest_gagliardo_semi_pesos}
        |\u|_{W^{s,p}_{\tau}(\Omega, d^\alpha)^n}
        := \left(
          \int_\Omega \int_{B(x,\tau \delta(x))}
          \frac{|\u(y) - \u(x)|^p}{|y - x|^{n+sp}} \, d(x,y)^\alpha \dy\dx
        \right)^{1/p}
      \end{equation}\
      and 

      \begin{equation}
        \label{eq:rest_meng_semi_pesos}
        |\u|_{X^{s,p}_{\tau}(\Omega, d^\alpha)^n}
        := \left(
        \int_\Omega \int_{B(x,\tau \delta(x))} \frac{|(\u(y) - \u(x)) \cdot (y - x)|^p}{|y - x|^{n+sp+p}} \, d(x,y)^\alpha \dx\dy \right)^{1/p},
      \end{equation}\
      where $\tau\in (0,1)$, $\delta(x)=\dist(x,\partial\Omega)$, and $d(x,y)=\min\{\delta(x),\delta(y)\}$.

      \begin{theorem}\label{thm:rest_fractional_weighted_Korn_John}
        Let $\Omega\subset\R^n$ be a bounded John domain, $1<p<\infty$, and $0<s<1$. Take $\tau_1<\frac{1}{36\sqrt{n}}$ and $\frac{3}{5}\le \tau_2 < 1$. If $\beta\in \R$ is such that
        \begin{equation}\label{eq:condbeta}
          \beta + 1 - s > -(n-\dim_A(\partial\Omega))/p,
        \end{equation}
        then, there exists a constant $C$ such that
        \begin{equation}\label{eq:restricted_fractional_weighted_Korn_John}
          \inf_{\r \in RM}|\u-\r|_{W^{s,p}_{\tau_1}(\Omega,d^{p\beta })^n} \le C |\u|_{X^{s,p}_{\tau_2}(\Omega,d^{p\beta })^n},
        \end{equation}
        for any vector field $\u\in W^{s,p}_{\tau_2}(\Omega,d^{p \beta })^n $.
      \end{theorem}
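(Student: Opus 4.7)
The plan is to mimic the proof of Theorem \ref{thm:rest_fractional_Korn_John}, exploiting the fact that the weight $d(x,y)^{p\beta}$ is essentially constant on each Whitney cube and its smoothened expansion. More precisely, whenever $x\in Q_t$ and $y\in U_t$, both $\delta(x)$ and $\delta(y)$ are comparable to $\ell_t$ (since $U_t$ is a bounded expansion of $Q_t$ that stays away from $\partial\Omega$ at the Whitney scale), so $d(x,y)^{p\beta}\approx \ell_t^{p\beta}$ with constants depending only on $n$ and on the expansion factor, regardless of the sign of $\beta$. This will allow us to pull the weight in and out of the local integrals at will.

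First I would decompose $|\u-\r|_{W^{s,p}_{\tau_1}(\Omega,d^{p\beta})^n}^p$ over the Whitney cubes exactly as in the proof of Theorem \ref{thm:rest_fractional_Korn_John} and extend the inner domain of integration from $B(x,\tau_1\delta(x))$ to $U_t$ via the choice $\tau_1<\tfrac{1}{36\sqrt n}$. Replacing $d(x,y)^{p\beta}$ by $\ell_t^{p\beta}$ yields
\[|\u-\r|_{W^{s,p}_{\tau_1}(\Omega,d^{p\beta})^n}^p\lesssim \sum_{t\in V}\ell_t^{p\beta}\int_{U_t}\int_{U_t}\frac{|\u(x)-\u(y)-A(x-y)|^p}{|x-y|^{n+ps}}\dy\dx.\]
Insert $A_t=\Pi_{U_t}\u$ and split into the same two terms $I$ and $II$ as before. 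For $I$, Lemma \ref{lemma:Menguesha_smoothen_cubes} bounds the local integral by $\int_{U_t}\int_{U_t}\frac{|(\u(x)-\u(y))\cdot(x-y)|^p}{|x-y|^{n+sp+p}}$; multiplying by $\ell_t^{p\beta}\approx d(x,y)^{p\beta}$, re-absorbing the weight into the integrand, and extending back to $B(x,\tau_2\delta(x))$ (using $\tau_2\ge \tfrac35$ exactly as in the unweighted theorem) produces $|\u|_{X^{s,p}_{\tau_2}(\Omega,d^{p\beta})^n}^p$.

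For $II$, Corollary \ref{corol:Iijcubes} converts the integral to a weighted sum over $V$ of $|a^t_{ij}-a_{ij}|^p$ with weight $\ell_t^{n+p(1-s)+p\beta}=\ell_t^{n+p\gamma}$ for $\gamma=1-s+\beta$. Choose the global skew-symmetric matrix $A$ so that its coefficients are the weighted averages of $a^t_{ij}$ with respect to $\nu_t=\ell_t^{n+p\gamma}$, and apply the discrete Poincaré inequality \eqref{eq:discretePoincare} of Lemma \ref{lemma:Poincare} with $\nu_t=\mu_t=\ell_t^{n+p\gamma}$. This reduces the sum to one of differences $|a^t_{ij}-a^{t_p}_{ij}|^p$, and the telescoping step from the unweighted proof — using an intermediate smoothened cube $\tilde U_t\subset U_t\cap U_{t_p}$, Lemma \ref{lemma:Menguesha_smoothen_cubes}, the finite overlapping of the $U_t$, and the same enlargement to $B(x,\tau_2\delta(x))$ — produces the required bound by $|\u|_{X^{s,p}_{\tau_2}(\Omega,d^{p\beta})^n}^p$ once $\ell_t^{p\beta}$ is again traded back for $d(x,y)^{p\beta}$ inside each local integral.

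The main obstacle is justifying the discrete Poincaré inequality in this weighted setting, which is handled by Lemma \ref{lemma:discretePoincareholds}: with $\gamma=1-s+\beta$, its hypothesis \eqref{eq:condgamma} becomes $1-s+\beta>-(n-\dim_A(\partial\Omega))/p$, which is precisely \eqref{eq:condbeta}. The only additional book-keeping, relative to Theorem \ref{thm:rest_fractional_Korn_John}, is to check at every step that the factor $\ell_t^{p\beta}$ can be freely exchanged with $d(x,y)^{p\beta}$, for which the comparability remarked above is sufficient and independent of the sign of $\beta$; no further restriction on $\beta$ arises from these manipulations, so \eqref{eq:condbeta} is the only hypothesis needed.
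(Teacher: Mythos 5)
Your proposal is correct and follows the paper's proof essentially verbatim: pull $d(x,y)^{p\beta}\approx\ell_t^{p\beta}$ out of the local integrals on $U_t$, keep Lemma \ref{lemma:Menguesha_smoothen_cubes} unweighted, and shift the weight onto the discrete problem so that $\nu_t=\mu_t=\ell_t^{n+p\gamma}$ with $\gamma=1-s+\beta$, whereupon \eqref{eq:condgamma} becomes \eqref{eq:condbeta}. The one detail you glossed over, and which the paper makes explicit, is that the weighted average defining $A$ requires $\sum_t\ell_t^{n+p-ps+p\beta}<\infty$ — trivial when $\beta\ge 0$ but not when $\beta<0$; the paper checks this with the same Assouad bound $\mathbb{W}_i(t_0)\lesssim (2^{i-k_0})^\lambda$, and the convergence condition is again exactly \eqref{eq:condbeta}, so no extra hypothesis is needed.
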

      \begin{proof}
        The proof is essentially the same as that of Theorem \ref{thm:rest_fractional_Korn_John}. We only add some details where the weight $d^\beta$ plays a relevant role. We begin in the same way, localizing the norm in $U_t\times U_t$. Then, we pull the weight out of the integrals, by using that $d(x,y)^{p\beta}\approx \ell_t^{p\beta}$.

        \begin{align*}
          |\u-\r|^p_{W^{s,p}_{\tau_1}(\Omega,d^{p\beta})^n} &\le C\sum_{t\in V}\int_{U_t}\int_{U_t}\frac{|\u(x)-\u(y)|^p}{|x-y|^{n+ps}}d(x,y)^{p\beta} \dy\dx\\
          &\le C\sum_{t\in V}\ell_t^{p\beta}\int_{U_t}\int_{U_t}\frac{|\u(x)-\u(y)|^p}{|x-y|^{n+ps}} \dy\dx.
        \end{align*}

        Now, the proof continues exactly as in Theorem \ref{thm:rest_fractional_Korn_John}. For the term $I$, the (unweighted) local inequality given by Lemma \ref{lemma:Menguesha_smoothen_cubes} is applied. However, the factor $\ell_t^{p\beta}$ needs to be taken into account when dealing with the term $II$. In particular, the matrix $A$ is defined as
        \[A = \frac{1}{\sum_{t\in V}\ell_t^{n+p-ps+p\beta}} \sum_{t\in V}\ell_t^{n+p-ps+p\beta}A_t\] The good definition of $A$ relies on the boundedness of $\sum_{t\in V}\ell_t^{n+p-ps+p\beta}$. In order to estimate this summation, let us recall \eqref{eq:W(i)} and the fact that $\mathbb{W}_i(t_0) \le C \left(\tfrac{2^i}{2^{k_0}}\right)^\lambda$ where $\ell_{t_0}= 2^{-k_0}$ and $\lambda>\dim_A(\partial\Omega)$. Then:
        \begin{align*}
          \sum_{t\in V} \ell_t^{n+p-ps+p\beta} &= \sum_{t\succeq t_0} \ell_t^{n+p-ps+p\beta} \\
          &= \sum_{i=k_0}^\infty \mathbb{W}_i(t_0) (2^{-i})^{n+p-ps+p\beta} \\
          &\le C\sum_{i=k_0}^\infty \left(\tfrac{2^i}{2^{k_0}}\right)^\lambda (2^{-i})^{n+p-ps+p\beta} \\
          &\le C \sum_{i=k_0}^\infty (2^{-i})^{n+p-ps+p\beta -\lambda},
        \end{align*}
        which is finite since condition \eqref{eq:condbeta} implies $n+p-ps+p\beta-\lambda>0$.

        Finally, the discrete Poincaré inequality can be applied with weights $\mu_t=\nu_t = \ell_t^{n+p-ps+p\beta}$. Indeed, in terms of Lemma \ref{lemma:discretePoincareholds} we have $\gamma = \beta + 1 -s$, and condition \eqref{eq:condgamma} translates exactly into \eqref{eq:condbeta}.

        The rest of the analysis is exactly as in Theorem \ref{thm:rest_fractional_Korn_John}: the unweighted inequality given by Lemma \ref{lemma:Menguesha_smoothen_cubes} is applied on $U_t$ and on $\tilde{U}_t$, obtaining
        \begin{align*}
          I+II &\le C\sum_{t\in V}\ell_t^{p\beta} \int_{U_t}\int_{|y-x|<\tau_2 \delta(x)} \frac{|(\u(x)-\u(y))\cdot(y-x)|^p}{|x-y|^{n+sp+p}}\dy\dx\\
          &\le C\sum_{t\in V}\int_{U_t}\int_{|y-x|<\tau_2 \delta(x)} \frac{|(\u(x)-\u(y))\cdot(y-x)|^p}{|x-y|^{n+sp+p}}d(x,y)^{p\beta}\dy\dx\\
          &\le C |\u|_{X^{s,p}_{\tau_2}(\Omega,d^{p\beta})^n}^p,
        \end{align*}
        where we used that for $x\in U_t$, and $|y-x|<\tau_2 \delta(x)$, one has $d(x,y)\approx \ell_t$, and the finite overlapping of the smoothened cubes $U_t$.
      \end{proof}

      \section{Fractional Korn inequality on uniform domains}

      In this section we study the fractional Korn inequality on uniform domains.

      \begin{theorem}\label{thm:fractional_Korn_uniform}
        Let $\Omega\subset\R^n$ be a bounded uniform domain, $1<p<\infty$, and $0<s<1$. Then, there exists a constant $C>1$ such that
        \begin{equation}\label{eq:fractional_Korn}
          \inf_{r \in RM} |\u-\r|_{W^{s,p}(\Omega)^n} \le C |\u|_{X^{s,p}(\Omega)^n},
        \end{equation}
        for any vector field $\u\in W^{s,p}(\Omega)^n $.
      \end{theorem}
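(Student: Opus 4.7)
The plan is to derive the theorem from the truncated version of Theorem~\ref{thm:rest_fractional_Korn_John} by using the equivalence, valid on uniform domains, between the full Gagliardo seminorm and its truncated counterpart. Specifically, I will establish as an auxiliary statement that for any $\v\in W^{s,p}(\Omega)^n$,
\[
|\v|_{W^{s,p}(\Omega)^n}\le C\,|\v|_{W^{s,p}_{\tau_1}(\Omega)^n},
\]
where $C$ depends only on $n$, $p$, $s$, $\tau_1$ and the uniformity constant of $\Omega$; this is a general property of $W^{s,p}$ on uniform domains and does not involve the Korn structure. Taking this for granted and applying Theorem~\ref{thm:rest_fractional_Korn_John} with $\tau_1<\tfrac{1}{36\sqrt{n}}$ and $\tau_2=\tfrac{3}{5}$ to produce a rigid motion $\r_0\in RM$ with $|\u-\r_0|_{W^{s,p}_{\tau_1}(\Omega)^n}\le C|\u|_{X^{s,p}_{\tau_2}(\Omega)^n}$, the desired inequality follows from the chain
\[
\inf_{\r\in RM}|\u-\r|_{W^{s,p}(\Omega)^n}\le|\u-\r_0|_{W^{s,p}(\Omega)^n}\le C|\u-\r_0|_{W^{s,p}_{\tau_1}(\Omega)^n}\le C|\u|_{X^{s,p}_{\tau_2}(\Omega)^n}\le C|\u|_{X^{s,p}(\Omega)^n}.
\]

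To establish the auxiliary equivalence, I would split $|\v|_{W^{s,p}(\Omega)^n}^p$ into short-range pairs ($|y-x|<\tau_1\delta(x)$) and long-range pairs ($|y-x|\ge\tau_1\delta(x)$); the first contribution is by definition the truncated seminorm. For the long-range part the uniform condition enters decisively: given $x\in Q_s$ and $y\in Q_t$ with $|y-x|\ge\tau_1\delta(x)$, the rectifiable arc joining them provided by the definition of a uniform domain can be covered by a chain of pairwise $(n-1)$-neighbouring Whitney cubes $Q_s=Q_{t_0},\ldots,Q_{t_N}=Q_t$ whose edge-lengths grow and then decay geometrically, peak at some $\ell_{t_{k^\ast}}\approx|x-y|$, and with chain length $N\lesssim 1+\log(|x-y|/\min(\ell_s,\ell_t))$. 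Choosing a point $z_k$ in each common face $Q_{t_k}\cap Q_{t_{k+1}}$ and telescoping yields, via H\"older's inequality,
\[
|\v(x)-\v(y)|^p\le N^{p-1}\sum_{k=0}^{N-1}|\v(z_k)-\v(z_{k+1})|^p,
\]
and replacing each pointwise difference by a double average over $Q_{t_k}\times Q_{t_{k+1}}$ produces local quantities controlled by the truncated seminorm of $\v$, since $|z_k-z_{k+1}|\lesssim\min(\ell_{t_k},\ell_{t_{k+1}})$ is at most $\tau_1$ times the distance of either endpoint to $\partial\Omega$.

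Integrating the telescoped inequality over the long-range pairs and interchanging the order of summation so that the outer sum runs over chain edges $(Q_u,Q_v)$ reduces the long-range contribution to a bound of the form
\[
C\sum_{Q_u\sim Q_v}\Lambda_{u,v}\int_{Q_u}\int_{Q_v}\frac{|\v(x')-\v(y')|^p}{|x'-y'|^{n+sp}}\,dy'\,dx',
\]
where $\Lambda_{u,v}$ is a combinatorial weight collecting the factor $N^{p-1}$ together with $|Q_u||Q_v||x-y|^{-n-sp}$ summed over all $(x,y)$ whose chain crosses the edge $(Q_u,Q_v)$. The main obstacle is showing that $\Lambda_{u,v}$ is uniformly bounded in $(u,v)$: the exponential decay of the cube sizes away from the peak index $k^\ast$ in each chain forces the contribution of a fixed edge to form a convergent geometric series in the ``level'' of the edge, absorbing both the factor $N^{p-1}$ and the logarithmic growth of $N$. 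This combinatorial estimate is precisely the step where the uniform condition plays an essential role and cannot be weakened to a John assumption.
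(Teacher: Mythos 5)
Your overall reduction coincides with the paper's: both deduce the theorem from the truncated Korn inequality on John domains (Theorem~\ref{thm:rest_fractional_Korn_John}) combined with the comparison $|\v|_{W^{s,p}(\Omega)^n}\le C\,|\v|_{W^{s,p}_{\tau_1}(\Omega)^n}$, valid on uniform domains. Where you differ is in the justification of that comparison. The paper does not argue it from scratch; it deduces it from two cited results, a norm-level equivalence between full and truncated fractional norms on uniform domains \cite{PratsSaksman} and a fractional Poincar\'e inequality \cite{RV2013}, chained through a normalization of $\u-\r$ to zero mean. You instead propose a self-contained proof by covering the uniform arc with a chain of Whitney cubes and telescoping. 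This is a legitimate alternative and would make the section independent of those references; it also avoids the zero-mean normalization since you compare seminorms directly. But two load-bearing steps in the sketch are not yet in place.

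First, the local estimate on which the chain closes is false as stated. You claim the double average of $|\v(x')-\v(y')|^p|x'-y'|^{-n-sp}$ over $Q_{t_k}\times Q_{t_{k+1}}$ is controlled by $|\v|_{W^{s,p}_{\tau_1}}$ because $|z_k-z_{k+1}|\lesssim\min(\ell_{t_k},\ell_{t_{k+1}})\le\tau_1\delta$. For Whitney cubes, however, $\delta(x')\approx\diam(Q_{t_k})$, and typical pairs $(x',y')\in Q_{t_k}\times Q_{t_{k+1}}$ have $|x'-y'|\approx\diam(Q_{t_k})$, so $|x'-y'|/\delta(x')$ is bounded below by a constant of order $1/(9\sqrt{n})$, strictly larger than the theorem's $\tau_1<1/(36\sqrt{n})$. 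Thus most of $Q_{t_k}\times Q_{t_{k+1}}$ lies outside the truncation ball $B(x',\tau_1\delta(x'))$, and the double average is not directly dominated by the truncated seminorm. This is repairable---for instance, by first proving the comparison with a larger dimension-free $\tau$ and then showing separately that $|\cdot|_{W^{s,p}_\tau}$ and $|\cdot|_{W^{s,p}_{\tau_1}}$ are comparable on uniform domains for any $\tau,\tau_1\in(0,1)$, or by refining each Whitney cube into sub-cubes small enough that neighbouring differences land inside the truncation region---but a repair is necessary. Second, the uniform boundedness of the combinatorial weight $\Lambda_{u,v}$ is only asserted; absorbing the $N^{p-1}$ factor and the logarithmic chain length into the geometric decay of cube sizes away from the peak index is precisely the delicate heart of such chain arguments and must be carried out explicitly. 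These gaps affect only your self-contained proof of the seminorm equivalence; the final deduction from Theorem~\ref{thm:rest_fractional_Korn_John} is correct and matches the paper's.
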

      \begin{proof}
        This result follows from Theorem \ref{thm:rest_fractional_Korn_John} once we establish  the equivalence between the fractional seminorm $W^{s,p}(\Omega)^n$ and the truncated fractional seminorm $W^{s,p}_\tau(\Omega)^n$, that holds on uniform domains for every $1<p<\infty$, $0<s<1$, and $0<\tau<1$.
        Indeed, this equivalence follows from the one between the standard and truncated fractional norms \cite[Theorem 1.6]{PratsSaksman}, and the fractional Poincar\'e inequality \cite[Theorem 4.3]{RV2013}
        \begin{equation}
        \|\u - \u_\O\|_{L^p(\O)^n} \le C |\u|_{W^{s,p}_\tau(\O)^n}
        \end{equation}
        where $\u_\O$ is the average of $\u$ over $\O$.

       Now, pick any $\tau_1$ as in the statement of Theorem \ref{thm:rest_fractional_Korn_John} and let $\r(x)=b+Ax$ with $A$ as in the proof of that theorem and $b$ such that $(\u -\r)$ has zero average. Then, we can write
       \begin{align*}
       |\u-\r|_{W^{s,p}(\O)^n}&\le  \|\u-\r\|_{W^{s,p}(\O)^n} \\
       &\le C \|\u-\r\|_{W^{s,p}_{\tau_1}(\O)^n}\\
       &\le C |\u-\r|_{W^{s,p}_{\tau_1}(\O)^n} \\
       &\le C |\u|_{X^{s,p}(\O)^n}
       \end{align*}
       which finishes the proof.
      \end{proof}

\section*{Acknowledgements}

The author L\'opez-Garc\'ia expresses his sincere gratitude to the Departamento de Matem\'atica of the Universidad de Buenos Aires and the Instituto de Investigaciones Matem\'aticas Luis A. Santal\'o (IMAS, CONICET–UBA) for their hospitality during his sabbatical visit.

      \bibliographystyle{plain}
      \bibliography{references.bib}
      \end{document}